\numberwithin{equation}{section}
\newcommand{\R}{\mathbb{R}}
\newcommand{\C}{\mathbb{C}}
\newcommand{\N}{\mathbb{N}}
\newcommand{\g}{\mathfrak{g}}
\newcommand{\ka}{\mathfrak{k}}
\newcommand{\pe}{\mathfrak{p}}
\newcommand{\norme}[1]{\left\Vert #1\right\Vert}
\newcommand{\inde}{\text{Ind}}
\newcommand{\ind}{\text{\emph{Ind}}}
\newcommand{\SL}{\mathrm{SL}}
\newcommand{\SO}{\mathrm{SO}}
\title{Continuity of the Mackey-Higson bijection}
\author{Alexandre Afgoustidis}
\address{CEREMADE, Université Paris-Dauphine, PSL, 75016 Paris, France}
\curraddr{\emph{Current Affiliation:} CNRS \& Institut Élie Cartan de Lorraine, UMR 7502}
\email{alexandre.afgoustidis@math.cnrs.fr}
\author{Anne-Marie Aubert}
\address{Institut de Math\'ematiques de Jussieu – Paris Rive Gauche, CNRS, Sorbonne Universit\'e, Universit\'e de Paris, 75005 Paris, France}
\email{anne-marie.aubert@imj-prg.fr}
\begin{document}

%-------------------------- Frontmatter------------------------%
\begin{abstract}  {When $G$ is a real reductive group and $G_0$ is its Cartan motion group, the Mackey-Higson bijection is a natural one-to-one correspondence between all irreducible tempered representations of $G$ and all irreducible unitary representations of $G_0$. In this short note, we collect some known facts about the topology of the tempered dual $\widetilde{G}$ and that of the unitary dual $\widehat{G_0}$, then verify that the Mackey-Higson bijection $\widetilde{G} \to \widehat{G_0}$ is continuous.}\end{abstract}

%-------------------------- Mainmatter------------------------%
\maketitle
\section{Introduction}\label{sec:intro}

Let $G$ be the group of real points of a connected reductive algebraic group defined over $\R$. Given a maximal compact subgroup $K$, form the Cartan motion group $G_0$ attached to $G$ and $K$: if $\g=\ka \oplus \pe$ is the Cartan decomposition of the Lie algebra of $G$, then $G_0$ is the semidirect product $K \ltimes \pe$. Write $\widetilde{G}$ for the tempered dual of $G$ and $\widehat{G_0}$ for the unitary dual of $G_0$.

Recent years have brought proof that the parameters necessary to describe $\widetilde{G}$ and $\widehat{G_0}$ are identical: there is a natural, but non-trivial, one-to-one correspondence between $\widetilde{G}$ and $\widehat{G_0}$  \cite{MackeyConjecture, Higson2008, HigsonComplex, AAMackey}. We will refer to this correspondence as the \emph{Mackey-Higson bijection}.

When equipped with the Fell topology, neither $\widetilde{G}$ nor $\widehat{G_0}$ is usually Hausdorff. But it has been known for a while that the existence of a topologically well-behaved bijection between $\widetilde{G}$ and $\widehat{G_0}$ could be of particular significance for certain aspects of representation theory, especially in relation with the study of group $C^\ast$-algebras. Alain Connes and Nigel Higson pointed out in the late 1980s $-$ long before the actual construction of the Mackey-Higson bijection $-$ that the \emph{Baum-Connes-Kasparov isomorphism} for the $K$-theory of the reduced $C^\ast$-algebra of $G$ can be viewed as a statement that $\widetilde{G}$ and $\widehat{G_0}$, although not homeomorphic, always have in a precise sense the same $K$-theory (see \cite{BaumConnesHigson}).
 
The Mackey-Higson bijection \emph{does} have pleasant topological properties: building on $C^\ast$-algebraic methods due to Nigel Higson \cite{Higson2008} in the complex case, one of us showed in \cite{AAKasparov} that the bijection is a \emph{piecewise homeomorphism}. For real groups, the homeomorphic pieces are defined through David Vogan's theory of lowest $K$-types. The pieces are stitched together differently in both duals; but taking $K$-theory somehow blurs out that fact: the Connes-Kasparov isomorphism can actually be obtained in a rather elementary way from the topological properties of the Mackey-Higson bijection (this is the main result of \cite{AAKasparov}). 

In this short paper, we complete this topological information by proving that the Mackey-Higson bijection maps $\widetilde{G}$ \emph{continuously} onto  $\widehat{G_0}$, although it is never a homeomorphism (except in trivial cases). Together with the results of \cite{AAKasparov}, we obtain the following picture for its topological properties: 

\begin{theo}\label{main}
\begin{enumerate}[(a)]
\item \emph{(Proposition \ref{disco} and Theorem \ref{continuite} below.)}  {The Mackey-Higson bijection $\mathcal{M}: \widetilde{G} \to \widehat{G_0}$ is continuous, but $\mathcal{M}^{-1}$ is not continuous unless $G$ and $G_0$ are isomorphic.}
\item (\cite[Corollary 3.7]{AAKasparov}) {Let $\mathcal{C}$ be a finite subset of the unitary dual $\widehat{K}$ of $K$, and let $\widetilde{G}[\mathcal{C}]$ (resp. $\widehat{G_0}[\mathcal{C}]$) be the (possibly empty) subset of $\widetilde{G}$ (resp. $\widehat{G_0}$) consisting of those representations whose set of lowest $K$-types is exactly $\mathcal{C}$. The bijection $\mathcal{M}$ induces a homeomorphism between $\widetilde{G}[\mathcal{C}]$ and $\widehat{G_0}[\mathcal{C}]$.}
\end{enumerate}
\end{theo}

Let us  further comment on property (a). The construction of the Mackey-Higson bijection is easier to write down in the $\widehat{G_0} \to \widetilde{G}$ direction; yet it is the reverse map, from $\widetilde{G}$ to $\widehat{G_0}$, which turns out to be continuous. This is only natural given the link with noncommutative geometry, particularly the recent work of Nigel Higson and Angel Rom\'an on reduced $C^\ast$-algebras for complex groups \cite{HigsonRoman}. They proved the existence of a nontrivial continuous embedding $C^\ast_r(G_0) \to C^\ast_r(G)$, and characterized the Mackey bijection in terms of this embedding $-$ in the particular case of complex groups, the existence of the embedding can be viewed as morally ``dual'' to our continuity result on the reduced duals.

We should mention that most of the available information on the Mackey bijection has been obtained through the existence of a continuous family $(G_t)_{t \in [0,1]}$ of groups interpolating between $G$ and $G_0$: see \cite{AAContractions, AAKasparov, HigsonRoman, Subag}. By contrast, our method here is direct:  we use the fine details of the algebraic construction in \cite{AAMackey}, together with various known results on the topology of representation spaces. The properties we shall need are essentially algebraic, often boiling down to a careful analysis of lowest $K$-types, and do not rely on deformation theory.

Our proof, in \S \ref{sec:resultat}, will be obtained by focusing on the restriction of the Mackey-Higson bijection to each connected component of $\widetilde{G}$. The Fell topology on $\widetilde{G}$ has been known quite precisely since the 1980s \cite{Delorme_topo, VoganIsolated}, and the topology of $\widehat{G_0}$  has been described in 1968 \cite{Baggett};  we will use these descriptions and recall the necessary details in \S \ref{sec:temp} and \S \ref{sec:cartan}.

We close this Introduction with a few remarks on the possibility that analogues of properties (a) and (b) above may hold true for the admissible dual. (We thank the referee for suggesting and clarifying this.) Consider the admissible duals $\Pi_{\mathrm{adm}}(G)$ and $\Pi_{\mathrm{adm}}(G_0)$. The Mackey-Higson bijection extends to a bijection $\mathcal{M}_{\mathrm{adm}}: \Pi_{\mathrm{adm}}(G) \to \Pi_{\mathrm{adm}}(G_0)$: see \cite[\S 5]{AAMackey}. 

Furthermore, both admissible duals are stratified according to lowest $K$-type theory. Given a class $\sigma \in  \widehat{K}$, we can consider the subsets $\Pi_{\sigma}(G) \subset \Pi_{\mathrm{adm}}(G)$ and $\Pi_{\sigma}(G_0) \subset \Pi_{\mathrm{adm}}(G_0)$ consisting of those representations that admit $\sigma$ as a lowest $K$-type. These strata do not define a partition of the admissible duals, in contrast to the subsets $\widetilde{G}[\mathcal{C}]$ of property (b): strata attached to different representations $\sigma$ may intersect nontrivially. But each stratum is an affine algebraic variety (see \cite[Remark 5.5]{Subag}), and the intersection of each stratum $\Pi_{\mathrm{adm}}(G)$ with the tempered dual $\widetilde{G}$ is a finite disjoint union of subsets of the form $\widetilde{G}[\mathcal{C}]$. 

Now, when $G$ is a complex semisimple group, Nigel Higson proved in \cite{HigsonComplex} that for each $\sigma \in \widehat{K}$, the map $\mathcal{M}_{\mathrm{adm}}:  \Pi_{\mathrm{adm}}(G) \to \Pi_{\mathrm{adm}}(G_0)$ induces an isomorphism of algebraic varieties between $\Pi_{\sigma}(G) $ and $\Pi_{\sigma}(G_0)$. Eyal Subag conjectured in \cite[\S 5]{Subag} that the same property holds for arbitrary $G$. He verified this (and much more) in \cite[\S 6]{Subag} for $G=\mathrm{SL}(2,\R)$ using the algebraic framework of \cite{BHS1, BHS2}. 

In view of these results, it it tempting to wonder about the topological properties of the extension $\mathcal{M}_{\mathrm{adm}}: \Pi_{\mathrm{adm}}(G) \to \Pi_{\mathrm{adm}}(G_0)$, and to speculate that it may be continuous (in the Fell topologies described e.g. in  \cite{Rader, Fell_adm, Pandzic}), but not a homeomorphism except in trivial cases. Furthermore, it seems that Subag's conjecture may be the final word on the algebraic analogue of property (b) above. 

However, the Fell topology on  $\Pi_{\mathrm{adm}}(G)$ is more subtle than that of the tempered dual  (see \cite{Pandzic}). Because non-associated principal series representations can have a common subquotient, the connected components of $\Pi_{\mathrm{adm}}(G)$ cannot be described by simple results parallel to those of \S \ref{composantes_connexes}. Therefore, extending our method to cover the case of the admissible duals would necessitate results on the Fell topology of $\Pi_{\mathrm{adm}}(G)$ which we have been unable to locate in the literature, or to prove ourselves. As a result, we shall remain entirely confined within the tempered duals here.

\begin{enonce*}[remark]{Acknowledgments}  We thank Nigel Higson for dissipating an early misconception about the topology of $\widehat{G_0}$, and Maarten Solleveld for correcting a mistake in the first version of Proposition \ref{disco}.\end{enonce*}
\section{Topology of the tempered dual} \label{sec:temp}

\subsection{Harish-Chandra decomposition} 

Suppose $G$ is the group of real points of a connected reductive algebraic group defined over $\R$. Fix a Haar measure on $G$,  form the reduced  $C^\star$-algebra  $C^\star_r(G)$ and the category
\[ \mathcal{M}^t(G) = \text{ category of continuous  nondegenerate }  C^\star_r(G)\text{-modules}\]
of tempered representations of $G$. 

Let us first recall how the general shape of Harish-Chandra's Plancherel formula yields an infinite direct product decomposition of $\mathcal{M}^t(G)$. The presentation is modeled on the $p$-adic case, more precisely, on Schneider and Zink's tempered version of the Bernstein decomposition of the category of admissible representations \cite{KZ} (see also \cite[\S III.7]{Waldspurger}). The results of this paragraph are true exactly as stated if $F$ is any local field and $G$ is the group of $F$-points of a connected reductive algebraic group defined over $F$.\\

Let us call \emph{discrete pair} any pair $(L, \sigma)$ in which 
\begin{itemize}
\item[$\bullet$]  $L$ is a Levi subgroup of $G$
\item[$\bullet$]  $\sigma \in \widetilde{L}$ is the equivalence class for a {tempered} irreducible representation of $L$ that is square-integrable-modulo-center.
\end{itemize}

The group $G$ acts on the set of discrete pairs: if $(L, \sigma)$ is a discrete pair and $\bar{\sigma}$ is a tempered irreducible representation of $L$ with class $\sigma$, then for $g \in G$, the discrete pair $g \cdot (L, \sigma)$ is the  pair $(gLg^{-1}, \text{class of } \bar{\sigma}(g^{-1} \cdot g))$. We write  $\Omega^t(G)$ for the space of orbits.

For every Levi subgroup $L$ of $G$, let us call \emph{unramified unitary  character} of $L$ any unitary character of $L$ that is trivial on every compact subgroup of $L$; we will write  $\mathcal{X}_u(L)$ for the set of unramified unitary characters of $L$. 

Let us now fix a discrete pair $(L, \sigma)$. The map
\begin{align*} \Phi_{L, \sigma}:\ \mathcal{X}_u(L) & \to \Omega^t(G) \\ \chi & \mapsto \text{ orbit of } (L, \sigma \otimes \chi) \end{align*}
sends $\mathcal{X}_u(L)$ to a subset $\Theta_{L, \sigma}=\Phi_{L, \sigma}\left( \mathcal{X}_u(L) \right)$ of $\Omega^t(G)$. Inequivalent pairs $(L, \sigma)$ map to disjoint subsets of $\Omega^t(G)$. We write $\mathcal{B}^t(G) = \left\{ \Theta_{L,\sigma}, \ [(L, \sigma)] \in \Omega^t(G)\right\}$ for the set of blocks.

Suppose $\pi$ is an irreducible tempered representation of $G$. Then there exists a discrete pair $(L, \sigma)$ and  a parabolic subgroup $P=LN$ of $G$ with the property that $\pi$ is equivalent with one of the irreducible factors of $\inde_{LN}^G(\sigma)$. The pair $(L, \sigma)$ determines an element $\Theta_{L,\sigma}$  of $\mathcal{B}^t(G)$ which depends only on $\pi$, not on the choice of $(L, \sigma)$: we may and will call it the \emph{discrete support} of $\pi$. For all this, original sources include \cite{Trombi, KZ_PNAS}; a convenient reference is \cite[\S 5-6]{CCH}.

For every block $\Theta \in \mathcal{B}^t(G)$, we write $\mathcal{M}^t(\Theta)$ for the category of continuous nondegenerate $C^\star_r(G)$-modules whose irreducible factors all have discrete support $\Theta$. Harish-Chandra's work induces a direct sum decomposition 
\begin{equation} \label{CCH} C^\star_r(G) = \sum \limits_{\Theta \in \mathcal{B}^t(G)}  C^\star_r(G)_{\Theta},\end{equation}
where the spectrum of a given component $C^\star_r(G)_{\Theta}$ is
\begin{equation} \label{bloc} \widetilde{G}_{\Theta}= \left\{ \pi \in \widetilde{G} \quad \text{whose discrete support is $\Theta$} \right\}; \end{equation}
see \cite[Proposition 5.17 and Theorem 6.8]{CCH}. This yields a partition 
\[ \widetilde{G} = \bigsqcup \limits_{\Theta \in  \mathcal{B}^t(G)}  \widetilde{G}_{\Theta}\]
of the tempered dual into disjoint subsets, and a direct product decomposition
\[ \mathcal{M}^t(G) = \prod \limits_{\Theta \in \mathcal{B}^t(G)}  \mathcal{M}^t(\Theta)\]
of the category of tempered representations. We refer to \cite{KZ} for the parallel with the non-archimedean case. 

\subsection{Connected components of the tempered dual $\widetilde{G}$} \label{composantes_connexes} The following remark is important for what follows. Although it is well-known (see for instance \cite[théorème 2.6(ii)]{Delorme_topo}), we will sketch a proof. 

\begin{prop} \label{connexes} The connected components of $\widetilde{G}$ are the  $\widetilde{G}_{\Theta}$, $\Theta \in \mathcal{B}^t(G)$. \end{prop}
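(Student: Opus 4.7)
The plan is to prove the proposition by splitting it into two parts: (i) each $\widetilde{G}_{\Theta}$ is a union of connected components (i.e., is simultaneously open and closed in $\widetilde{G}$), and (ii) each $\widetilde{G}_{\Theta}$ is itself connected.

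For part (i), I would invoke the direct sum decomposition \eqref{CCH} of $C^\star_r(G)$ into two-sided closed ideals. Passing to the spectrum in the Fell topology, a direct-sum decomposition of a $C^\ast$-algebra into closed ideals yields a decomposition of the spectrum as a disjoint union of open-and-closed subsets. Applied to \eqref{CCH}, this says exactly that each $\widetilde{G}_{\Theta}$ is both open and closed in $\widetilde{G}$, so the connected components of $\widetilde{G}$ refine the partition into blocks.

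For part (ii), I would fix a block $\Theta = \Theta_{L, \sigma}$ and describe $\widetilde{G}_{\Theta}$ as the set of irreducible constituents of parabolically induced representations $\mathrm{Ind}_{P}^G(\sigma \otimes \chi)$ as $\chi$ varies over $\mathcal{X}_u(L)$ and $P$ over parabolics with Levi $L$. The parameter space $\mathcal{X}_u(L)$ is a real vector group (characters of the split component of $L$), hence connected. By Harish-Chandra's results, there is a Zariski-dense open subset $\mathcal{X}_u(L)^{\mathrm{gen}} \subset \mathcal{X}_u(L)$ on which $\mathrm{Ind}_{P}^G(\sigma \otimes \chi)$ is irreducible; the Knapp--Stein / Harish-Chandra theory of the $R$-group even ensures this. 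Parabolic induction is continuous for the Fell topology, so the assignment $\chi \mapsto \mathrm{Ind}_{P}^G(\sigma \otimes \chi)$ gives a continuous map from the connected set $\mathcal{X}_u(L)^{\mathrm{gen}}$ into $\widetilde{G}_{\Theta}$, whose image I will call $\widetilde{G}_{\Theta}^{\mathrm{gen}}$. As a continuous image of a connected space, $\widetilde{G}_{\Theta}^{\mathrm{gen}}$ is connected.

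It remains to show that every point of $\widetilde{G}_{\Theta}$ lies in the Fell closure of $\widetilde{G}_{\Theta}^{\mathrm{gen}}$, which will force $\widetilde{G}_{\Theta}$ to be connected. This is the main obstacle in the argument, since one must handle reducible induced representations at non-generic parameters. I would argue as follows: given a reducible point $\pi_0 \subset \mathrm{Ind}_{P}^G(\sigma \otimes \chi_0)$, take a path $\chi_t \to \chi_0$ in $\mathcal{X}_u(L)$ with $\chi_t \in \mathcal{X}_u(L)^{\mathrm{gen}}$ for $t>0$. By continuity of induction, $\mathrm{Ind}_{P}^G(\sigma \otimes \chi_t) \to \mathrm{Ind}_{P}^G(\sigma \otimes \chi_0)$ in the Fell topology, and every irreducible constituent of the limit must lie in the Fell limit of the irreducible $\mathrm{Ind}_{P}^G(\sigma \otimes \chi_t)$; this is a standard consequence of the continuity properties of the Fell topology for induced representations, as used in \cite{Delorme_topo} and \cite{CCH}. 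Hence $\pi_0$ is in the closure of $\widetilde{G}_{\Theta}^{\mathrm{gen}}$, and since the closure of a connected subspace is connected, $\widetilde{G}_{\Theta}$ is connected. Combining (i) and (ii) yields the proposition.
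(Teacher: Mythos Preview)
Your argument follows essentially the same two-step strategy as the paper: use the $C^\ast$-algebra decomposition \eqref{CCH} to see that each $\widetilde{G}_\Theta$ is clopen, then prove connectedness by combining generic irreducibility of parabolic induction with a limiting argument at the singular parameters. Part (i) and the closure step in part (ii) are fine and match the paper's reasoning (the paper carries out the matrix-coefficient limit explicitly in the compact picture rather than appealing to \cite{Delorme_topo}, but the content is the same).

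There is, however, one incorrect assertion in part (ii): in the archimedean case the set $\mathcal{X}_u(L)^{\mathrm{gen}}$ is \emph{not} connected in the ordinary topology. Identifying $\mathcal{X}_u(L)$ with $\mathfrak{a}^\star$, the reducibility locus sits inside a finite union of root hyperplanes, and the complement of real hyperplanes in a real vector space is a disjoint union of open chambers. So ``continuous image of a connected set'' does not apply as written, and you have not yet shown that $\widetilde{G}_\Theta^{\mathrm{gen}}$ is connected.

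The paper avoids this by working inside the closed dominant cone $\mathfrak{a}^+$: it fixes a strictly dominant $\nu_\star$, so that $\pi_\star = \mathrm{Ind}_P^G(\sigma \otimes e^{i\nu_\star})$ is irreducible, and for an arbitrary $\pi \in \widetilde{G}_\Theta$ with parameter $\nu \in \mathfrak{a}^+$ uses the convex path $\nu_t = t\nu + (1-t)\nu_\star$, which is strictly dominant (hence gives an irreducible induced representation) for all $t<1$. Your argument is easily repaired the same way---restrict the continuous parameter to the convex cone $\mathfrak{a}^+$, where every class in $\widetilde{G}_\Theta$ admits a representative---or alternatively note that the Knapp--Stein intertwiners identify the images of the different chambers in $\widetilde{G}$, so that $\widetilde{G}_\Theta^{\mathrm{gen}}$ is the image of a single (convex, hence connected) chamber.
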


\begin{proof}  We note first that for every $\Theta$, the subset $\widetilde{G}_{\Theta}$ of $\widetilde{G}$ is closed: the decomposition in \eqref{CCH} identifies  $\widetilde{G}_{\Theta}$ with the set of irreducible representations of $C^\star_r(G)$ that vanish on the ideal $J_\Theta=\sum \limits_{{\Theta}' \neq \Theta} C^\star_r(G)_{{\Theta}'}$. That is indeed a closed subset \cite[\S 3.2]{Dixmier}.

We now check that each $\widetilde{G}_{\Theta}$ is connected. To that end, we will fix an element  $\pi_\star$ of $\widetilde{G}_{\Theta}$ and prove that every element in  $\widetilde{G}_{\Theta}$ necessary lies in the same connected component of $\widetilde{G}$ as $\pi_\star$. 

Fix a component $\Theta \in \mathcal{B}^t(G)$. Among the discrete pairs $(L, \sigma)$ with equivalence class $\Theta$, there is one that has the additional property that if $L=MA$ is the Langlands decomposition of $L$ (where $M$ is the subgroup of $L$ generated by all compact subgroups of $L$, and $A$ is the split component), then the restriction $\sigma_{|A}$ is trivial. We will assume that our pair has that property. Writing $\mathfrak{a}$ for the Lie algebra of $A$ and $\mathfrak{a}^\star$ for its vector space dual, recall that an element of $\mathfrak{a}^\star$ is called $\mathfrak{a}$-\emph{regular} when its scalar product with every positive root of  $(\mathfrak{g}_\C, \mathfrak{a}_\C)$ is nonzero. Fix a parabolic subgroup $P=LN$ with Levi factor $L$; this determines an ordering of $\mathfrak{a}^\star$. Fix then an element $\nu_\star$ in $\mathfrak{a}^\star$ whose scalar product with every positive root of  $(\mathfrak{g}_\C, \mathfrak{a}_\C)$ is positive (and nonzero), then define $\pi_\star = \inde_{P}^G(\sigma \otimes e^{i\nu_\star})$. Since $\nu_\star$ is $\mathfrak{a}$-{regular}, that representation is irreducible (see e.g. \cite[Theorem 14.93]{Knapp}), and it lies~in~$\widetilde{G}_{\Theta}$.

For every representation $\pi$ in $\widetilde{G}_{\Theta}$, there exists an element $\nu$ in $\mathfrak{a}^\star$ which has the property that the scalar product of $\nu$ with every  positive root of  $(\mathfrak{g}_\C, \mathfrak{a}_\C)$ is nonnegative and that $\pi$ is equivalent with one of the irreducible constituents of $\inde_{P}^G(\sigma \otimes e^{i\nu})$.

For every $t$ in $[0,1]$, define $\nu_t = t \nu + (1-t)\nu_\star$ and $\pi_t = \inde_P^G(\sigma \otimes e^{i \nu_t})$. For  $t>0$, the element $\nu_t$ is $\mathfrak{a}$-regular because it has a positive scalar product with every  positive root of  $(\mathfrak{g}_\C, \mathfrak{a}_\C)$; thus  $\pi_t$ is irreducible for $t>0$. %

Using the ``compact picture''  for induced representations, together with the routine criteria for the continuity of parameter integrals, we can exhibit\footnote{Fix a Hilbert space $V^\sigma$ for $\sigma$; then for each $t \in [0,1]$, the representation $\pi_t$ can be realized on the Hilbert space $\mathcal{H}~=~\left\{ f: K \overset{L^2}{\longrightarrow} V^\sigma \ : \ \forall u \in (K \cap M), \ f(ku) = \sigma(u) f(k)\right\}$. Suppose $\xi$ and $\eta$ are elements of $\mathcal{H}$; then the matrix element $c_{\xi, \eta}: g \mapsto \langle \xi, \pi_t(g)\eta\rangle$ can be expressed using the Iwasawa decomposition $G=KMAN$. If an element of $G$ reads $g = \kappa(g) \mu(g) e^{H(g)} \nu(g)$, where $( \kappa(g), \mu(g), H(g),\nu(g)) \in K\times  (M \cap \exp(\pe)) \times \mathfrak{a} \times N$, then we have $c_{\xi, \eta}(g) = \int_{K} e^{\langle-i  \nu_t - \rho, \ H(u^{-1}g)\rangle}  f(g,u) du$, where $f(g,u) =  \langle \xi(u),  \sigma(\mu(u^{-1} g)) \eta(\kappa(u^{-1}g)\rangle_{V^\sigma}$ and $\rho$ is the usual half-sum of positive roots. See for instance \cite[\S 2.2]{vandenban}. The claim easily follows.} every matrix element of $\pi$ as a  limit (in the sense of uniform convergence on compact subset of $G$) of a family of matrix elements of $(\pi_t)_{t>0}$. This proves that $\pi$ is in the closure of the family   $(\pi_t)_{t>0}$, and thus $\pi=\pi_0$ and $\pi_\star=\pi_1$ must lie in the same connected component of $\widetilde{G}$.
\end{proof}

\subsection{Vogan's lowest-$K$-type picture for the decomposition} 

We now fix a maximal compact subgroup $K$ in $G$ and recall Vogan's description of the ``blocks'' $\widetilde{G}_{\Theta}$  in terms  of associate classes of lowest $K$-types (see \cite{Vogan85}).

Fix $\Theta$ in $\widetilde{G}_\Theta$. Let $(L, \sigma)$ be a discrete pair with equivalence class $\Theta$ and the additional property that the restriction $\sigma_{|A}$ to the split component $A$ of $L$ is trivial (see the proof of Proposition \ref{connexes}). Fix a parabolic subgroup $P$ of $G$ with Levi factor $L$, then write $\mathcal{C}_{\Theta}$ for the set of lowest $K$-types of the representation $\inde_{P}^G(\sigma)$. The notation is coherent, because all choices of $(L, \sigma)$ with equivalence class $\Theta$, provided $\sigma$ is trivial on the split component, lead to the same set of lowest $K$-types.

\begin{prop}[Vogan] \label{vogan} Let $\pi$ be an irreducible tempered representation of $G$. Then $\pi$ lies in the component $\widetilde{G}_{\Theta}$ if and only if at least one of its lowest $K$-types lies in $\mathcal{C}_{\Theta}$. When that is the case, all the lowest $K$-types of $\pi$  lie in $\mathcal{C}_{\Theta}$.   \end{prop}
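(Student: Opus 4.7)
The plan is to derive the proposition directly from Vogan's classification of irreducible tempered representations by associate classes of lowest $K$-types, as developed in \cite{Vogan85}. The starting point would be the standard-module description recalled in \S 2.1: every irreducible tempered $\pi$ appears as an irreducible summand of some $\inde_P^G(\sigma \otimes e^{i\nu})$, where $(L, \sigma)$ is a discrete pair with $\sigma_{|A}$ trivial ($L = MA$ being the Langlands decomposition), $P = LN$ a parabolic with Levi $L$, and $\nu \in \mathfrak{a}^\star$; the $G$-conjugacy class of $(L,\sigma)$ is, by definition, the discrete support $\Theta$ of $\pi$.

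First I would observe that, in the compact picture, the underlying $K$-module of $\inde_P^G(\sigma \otimes e^{i\nu})$ is the induced $K$-representation $\inde_{K \cap M}^K(\sigma_{|K \cap M})$, and therefore is manifestly independent of $\nu$. Consequently the set of its lowest $K$-types (with respect to Vogan's norm) depends only on the discrete pair $(L,\sigma)$, and by the very definition of $\mathcal{C}_{\Theta}$ this set is exactly $\mathcal{C}_{\Theta}$.

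Next I would invoke Vogan's lowest-$K$-type partition theorem for tempered standard modules: each element of $\mathcal{C}_\Theta$ occurs with multiplicity one in $\inde_P^G(\sigma \otimes e^{i\nu})$ and sits inside a single irreducible constituent. This gives at once the forward implication together with the final clause of the proposition: if $\pi \in \widetilde{G}_{\Theta}$, then the lowest $K$-types of $\pi$ form a nonempty subset of $\mathcal{C}_{\Theta}$.

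For the converse, I would invoke the deeper part of Vogan's classification: standard modules attached to non-associate discrete pairs have \emph{disjoint} sets of lowest $K$-types, i.e.\ $\mathcal{C}_{\Theta} \cap \mathcal{C}_{\Theta'} = \emptyset$ whenever $\Theta \neq \Theta'$ in $\mathcal{B}^t(G)$. Granted this, if $\pi$ has even one lowest $K$-type lying in $\mathcal{C}_{\Theta}$, then by the previous paragraph applied to the standard module containing $\pi$ its discrete support must be $\Theta$. The main obstacle is precisely this disjointness statement: it is the substantive content of Vogan's minimal-$K$-type theory, not a formal consequence of the elementary $K$-module computation above, and I would not reprove it but cite it directly from \cite{Vogan85}.
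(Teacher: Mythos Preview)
Your proposal is correct and matches the paper's approach: the paper does not give an independent proof of this proposition at all, but simply attributes it to Vogan and cites \cite{VoganAnnals} (for connected $G$) and \cite[\S 4]{VoganSpeh} (for the general case). Your sketch unpacks in more detail which ingredients of Vogan's lowest-$K$-type theory are being invoked (the $\nu$-independence of the $K$-module of a standard module, the multiplicity-one and partition statements for lowest $K$-types, and the disjointness of $\mathcal{C}_{\Theta}$ for distinct $\Theta$), but the substance is the same: this is Vogan's theorem, and both you and the paper defer to his work rather than reproving it.
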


See  \cite[Introduction]{VoganAnnals} for connected $G$, and \cite[\S 4]{VoganSpeh} for a class of groups that includes the current one. \qed

\subsection{Closure of a subset of $\widetilde{G}$}

We now formulate a version of the available results which yield a complete description of the topology of $\widetilde{G}$; let us mention contributions of Delorme \cite{Delorme_topo} and Mili\v{c}i\'c (see Vogan \cite{VoganIsolated}).

\begin{enonce}[remark]{Notations}  \label{notations} Fix a component  $\widetilde{G}_\Theta$ in the tempered dual; assume  $\widetilde{G}_\Theta$ is not a single point. Consider
\begin{align*}
 (L, \sigma) :& \text{ a discrete pair corresponding to $\Theta$ ;}\\
  L=MA :& \text{ the Langlands decomposition of $L$. }\\ &\text{We may and will assume that the restriction $\sigma_{|A}$ is trivial  } \\ &\text{and will identify $\sigma$ with its restriction to $M$ (a genuine discrete series representation).}\\
  P=MAN : &  \text{ a parabolic subgroup with Levi factor  $L$ }\\
\Delta^+ : &  \text{ the positive root system for $(\g_\C, \mathfrak{a}_\C)$ that corresponds to $N$ ;}\\
 \mathfrak{a}^{+} &= \left\{ \nu \in \mathfrak{a}^\star, \ \forall \alpha \in \Delta^+, \ \langle \alpha, \nu \rangle \geq 0\right\}. \text{ It is a closed cone in $\mathfrak{a}^\star$.}\\
{\mathfrak{a}}^\star\! /W &= \text{ the quotient of ${\mathfrak{a}}^\star$ by the Weyl group $W=W(\g_\C, \mathfrak{a}_\C)$. }\\ 
& \text{Every class in ${\mathfrak{a}}^\star\! /W$ has a finite (nonzero) number of representatives in  $\mathfrak{a}^+$.}
\end{align*}

For every irreducible tempered representation $\pi$ in the component $\widetilde{G}_\Theta$, there is a \emph{unique} $\nu=\nu(\pi)$ in ${\mathfrak{a}}^\star\! /W$ with the property that $\pi$ occurs in $\inde_{P}^G(\sigma \otimes e^{i\nu})$. 
\end{enonce} 

\begin{prop} Suppose $B$ is a subset of $\widetilde{G}_{\Theta}$. Form the set  $\mathcal{V} = \left\{ \nu(\pi), \ \pi \in B\right\}$ of continuous parameters for representations in  $B$, and its closure $\bar{\mathcal{V}} $ in ${\mathfrak{a}}^\star\! /W$. The closure of  $B$ in $\widetilde{G}$ consists of those  representations $\pi$ that belong to  $\widetilde{G}_\Theta$ and satisfy $\nu(\pi) \in \bar{\mathcal{V}}$. 

 \end{prop}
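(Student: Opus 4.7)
The plan is to establish the stated closure description by proving two inclusions. I would rely throughout on two inputs from the proof of Proposition \ref{connexes}: that each block $\widetilde{G}_{\Theta}$ is closed in $\widetilde{G}$, and that in the compact picture the representations $\inde_P^G(\sigma \otimes e^{i\nu})$ can all be realized on a single Hilbert space $\mathcal{H}$, with matrix coefficients depending continuously on $\nu \in \mathfrak{a}^\star$.

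For the inclusion $\bar{B} \subseteq \left\{\pi \in \widetilde{G}_\Theta : \nu(\pi) \in \bar{\mathcal{V}}\right\}$, closedness of $\widetilde{G}_\Theta$ gives $\bar{B} \subseteq \widetilde{G}_\Theta$, so it suffices to show that the parameter map $\nu \colon \widetilde{G}_\Theta \to \mathfrak{a}^\star/W$ is continuous in the Fell topology. I would argue this via the Harish-Chandra infinitesimal character: for $\pi \in \widetilde{G}_\Theta$, the infinitesimal character $\chi_\pi$ agrees with that of any induced representation containing $\pi$ as a constituent, and once the fixed discrete datum $(L, \sigma)$ is understood, $\chi_\pi$ is a polynomial function of $\nu(\pi) \in \mathfrak{a}^\star/W$ from which the latter can be recovered. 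Since $\pi \mapsto \chi_\pi$ is known to be continuous on the tempered dual, so is $\nu$; the desired inclusion then follows by the general fact that continuous maps send closures into closures.

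For the reverse inclusion, I would fix $\pi$ in $\widetilde{G}_\Theta$ with $\nu(\pi) \in \bar{\mathcal{V}}$, choose a sequence $(\nu_n)$ in $\mathcal{V}$ converging to $\nu(\pi)$ in $\mathfrak{a}^\star/W$, lift it to $\mathfrak{a}^+$ (using the finiteness statement in the standing notations), and pick $\pi_n \in B$ with $\nu(\pi_n) = \nu_n$. Because tempered induction from unitary data is completely reducible, $\pi$ is a direct summand of $\Pi := \inde_P^G(\sigma \otimes e^{i\nu(\pi)})$, and each $\pi_n$ is a direct summand of $\Pi_n := \inde_P^G(\sigma \otimes e^{i\nu_n})$, with all these representations realized on the common Hilbert space $\mathcal{H}$. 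Choosing vectors $\xi, \eta$ in the subspace of $\mathcal{H}$ that carries $\pi$, the matrix coefficient $g \mapsto \langle \xi, \pi(g)\eta\rangle$ coincides with $g \mapsto \langle \xi, \Pi(g)\eta\rangle$, and by compact-picture continuity it is the uniform-on-compacts limit of $g \mapsto \langle \xi, \Pi_n(g)\eta\rangle$. When $\nu_n$ is $\mathfrak{a}$-regular, $\Pi_n$ is irreducible and equals $\pi_n$, so this directly produces matrix coefficients of $\pi_n$ approximating those of $\pi$, which is exactly Fell convergence $\pi_n \to \pi$.

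The main obstacle is the possibility that some $\nu_n$ are singular, in which case $\Pi_n$ is reducible, $\pi_n$ is one of several summands, and $\langle \xi, \Pi_n(g)\eta\rangle$ is a finite sum of matrix coefficients of the various constituents of $\Pi_n$ rather than a matrix coefficient of $\pi_n$ itself. I expect this to be resolved by invoking the density of regular elements in $\mathfrak{a}^+$: after a small perturbation of the lifts that preserves convergence to $\nu(\pi)$, the $\nu_n$ may be assumed regular, reducing the singular case to the one already handled. Checking that this perturbation can be carried out while remaining within the Fell closure of the family $B$, via an appeal to the R-group parametrization of the summands over each singular fibre, is the only delicate point of the argument.
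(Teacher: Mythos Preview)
The paper's own proof is a one-line citation to Delorme \cite{Delorme_topo} and Mili\v{c}i\'c--Vogan \cite{VoganIsolated}, combined with Proposition~\ref{connexes}; your attempt at a self-contained argument is therefore a genuinely different route.

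Your treatment of the inclusion $\bar{B}\subseteq\{\pi\in\widetilde{G}_\Theta:\nu(\pi)\in\bar{\mathcal{V}}\}$ is along the right lines: what is needed is continuity of $\pi\mapsto\nu(\pi)$, and routing this through the infinitesimal character is reasonable, though both the Fell-continuity of $\pi\mapsto\chi_\pi$ and the recovery of $\nu(\pi)$ from $\chi_\pi$ once $(L,\sigma)$ is fixed deserve more justification than you offer.

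The reverse inclusion, however, has a genuine gap that your proposed fix does not close. You suggest perturbing singular $\nu_n$ to nearby regular ones ``while remaining within the Fell closure of the family $B$,'' but nothing guarantees that $\bar B$ contains any representation with regular parameter. Take $G=\SL(2,\R)$, let $\Theta$ be the nonspherical block of Example~\ref{ex_lim}, and let $B=\{\pi_0^+\}$ consist of a single limit of discrete series. Then $\mathcal{V}=\bar{\mathcal{V}}=\{0\}$, the only available $\nu_n$ is the singular value $0$, and since points of $\widetilde G$ are closed (the reduced $C^\ast$-algebra is liminal) one has $\bar B=B$, so no regular perturbation stays inside $\bar B$. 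Worse, this same example shows that the inclusion $\supseteq$ actually fails as literally written: the other limit of discrete series $\pi_0^-$ satisfies $\nu(\pi_0^-)=0\in\bar{\mathcal{V}}$ yet $\pi_0^-\notin\overline{\{\pi_0^+\}}$, because $\pi_0^+$ and $\pi_0^-$ have disjoint $K$-spectra and hence no matrix coefficient of one can be uniformly approximated by matrix coefficients of the other. So the point you flag as ``delicate'' is not a missing detail but an actual obstruction; the cited sources state things more carefully, and in any case the paper only ever uses the $\subseteq$ direction, via Corollary~\ref{conv_param_cont}.
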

 
  \begin{proof} Combine \cite[Théorème 2.6]{Delorme_topo} or \cite[Theorem 3]{VoganIsolated} with the information on connected components from \S \ref{composantes_connexes}.  \end{proof}

To obtain the closure of $B$ in $\widetilde{G}$, we can thus form the set of representations $\inde_{P}^G(\sigma \otimes \chi)$ for $\chi$ in the closure of the set of continuous parameters for members of $B$, then consider all irreducible factors in these.

\begin{exem} \label{ex_lim} Suppose  $G =\SL(2, \R)$,  $MAN$  is the upper triangular subgroup  (recall that $M = \{\pm I_2\}$ and $A$ is the subgroup of diagonal $2 \times 2$ matrices with positive entries and determinant one), and $\sigma$ is the one non-trivial representation of $M$. The discrete pair $(MA, \sigma)$ determines a component  $\Theta$ in $\widetilde{G}$. We can identify the unitary unramified characters of $MA$ with the unitary characters of $A$. Now, consider the set $B$ of representations of the form $\inde_{MAN}^G(\sigma \otimes \chi)$, where $\chi$ is a \emph{non-trivial} unitary character of $A$. All representations in $B$ are irreducible and tempered. To obtain the closure of $B$ in $\widetilde{G}$, we need to add to $B$ the two ``limits of discrete series'' whose direct sum is the (reducible) representation $\inde_{MAN}^G(\sigma \otimes e^{i0})$. \end{exem}

We shall need a consequence of the above results, taken from \cite[Theorem 3]{VoganIsolated}, that says what happens when one considers the continuous parameters for a convergent sequence of irreducible tempered representations. 

\begin{coro} \label{conv_param_cont} Suppose $\pi$ is an irreducible tempered representation of $G$ in the component $\widetilde{G}_\Theta$, and  $(\pi_n)_{n \in \N}$ is a sequence of irreducible tempered representations that admits $\pi$ as a limit point. There exists a subsequence $(\pi_{n_j})_{j\in \N}$ of representations that all lie in  $\widetilde{G}_{\Theta}$ and have the property that as $j$ goes to infinity, $\nu(\pi_{n_j}) $ goes to $\nu(\pi)$ in ${\mathfrak{a}}^\star\! /W$.
 \end{coro}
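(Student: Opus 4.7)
The plan is to derive Corollary~\ref{conv_param_cont} from the closure proposition stated just above, by a point-set-topology argument, given that the blocks $\widetilde{G}_\Theta$ form a partition of $\widetilde{G}$ into simultaneously open and closed pieces, and that $\widetilde{G}$ is first countable.

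First I would use the fact that~\eqref{CCH} is a $C^\star$-algebraic direct sum of two-sided ideals: this yields a topological disjoint union $\widetilde{G} = \bigsqcup_\Theta \widetilde{G}_\Theta$ in which each block is both open and closed. Since $C^\star_r(G)$ is separable, $\widetilde{G}$ is second countable and hence first countable; so from the hypothesis that $\pi$ is a limit point of $(\pi_n)$ I would extract a subsequence, still written $(\pi_n)$, with $\pi_n \to \pi$ in $\widetilde{G}$. The open neighborhood $\widetilde{G}_\Theta$ of $\pi$ then contains all but finitely many $\pi_n$, and after discarding those exceptions I may assume that the whole subsequence lies inside $\widetilde{G}_\Theta$.

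Next I would apply the closure proposition to the set $B = \{\pi_n : n \in \N\} \subset \widetilde{G}_\Theta$. Since $\pi$ lies in the closure of $B$ in $\widetilde{G}$, the proposition gives $\nu(\pi) \in \overline{\mathcal{V}}$, where $\mathcal{V} = \{\nu(\pi_n) : n \in \N\} \subset \mathfrak{a}^\star\!/W$. The space $\mathfrak{a}^\star\!/W$ is metrizable (the quotient of a finite-dimensional real vector space by a finite group action), so the fact that $\nu(\pi)$ is a point of closure of $\mathcal{V}$ allows a further subsequence $(\pi_{n_j})$ to be extracted with $\nu(\pi_{n_j}) \to \nu(\pi)$ in $\mathfrak{a}^\star\!/W$. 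That subsequence satisfies the two properties required by the corollary.

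The only real subtlety is the initial extraction of a subsequence lying entirely inside $\widetilde{G}_\Theta$: it is not enough to know that the $\widetilde{G}_\Theta$ are the connected components of $\widetilde{G}$, since in the (non-Hausdorff, not a priori locally connected) Fell topology this would not by itself guarantee that a block is \emph{open}. The necessary openness is, however, a direct consequence of the $C^\star$-algebraic decomposition~\eqref{CCH}, which has blocks as spectra of orthogonal ideal summands. Once this point is granted, the rest of the argument is automatic and the closure proposition does all of the representation-theoretic work.
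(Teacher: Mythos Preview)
Your derivation is essentially the one the paper has in mind: the corollary is presented there as an immediate consequence of the closure proposition together with the fact that the blocks $\widetilde{G}_\Theta$ are open and closed, and you spell that out carefully. Your remark that openness does \emph{not} follow from the blocks merely being the connected components, but rather from the $C^\star$-algebraic direct sum \eqref{CCH} (each summand being a closed two-sided ideal, hence having open spectrum), is exactly the right observation.

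One small technical slip: from $\nu(\pi)\in\overline{\mathcal{V}}$ with $\mathcal{V}=\{\nu(\pi_n):n\in\N\}$ you cannot directly extract a \emph{subsequence} $(\pi_{n_j})$ with $\nu(\pi_{n_j})\to\nu(\pi)$, since $\nu(\pi)$ could lie in $\mathcal{V}$ by virtue of a single index. The fix is immediate: since $\pi_n\to\pi$, the point $\pi$ lies in the closure of every tail $B_N=\{\pi_n:n\ge N\}$, so the closure proposition gives $\nu(\pi)\in\overline{\{\nu(\pi_n):n\ge N\}}$ for all $N$; this says precisely that $\nu(\pi)$ is a cluster point of the sequence $(\nu(\pi_n))_n$ in the metrizable space $\mathfrak{a}^\star/W$, whence the desired subsequence.
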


\section{Topology of the motion group dual and remarks on the Mackey-Higson bijection} \label{sec:cartan}

\subsection{Mackey parameters for $\widehat{G_0}$ and $\widetilde{G}$} \label{param_mackey}

Suppose $\sigma$ is a unitary irreducible representation of the motion group $G_0=K \ltimes \pe$ of \S \ref{sec:intro}. Given a pair  $(\chi, \mu)$ in which $\chi$ is an element of $\mathfrak{p}^\star$ and $\mu$ is an irreducible representation of the stabilizer $K_\chi$ of $\chi$ in $K$, we shall say that $(\chi, \mu)$ is a \emph{Mackey parameter for $\sigma$} when $\sigma$ is equivalent, as a representation of $G_0$, with $\inde_{K_\chi \ltimes \pe}^{G_0}(\mu \otimes e^{i\chi})$. Every unitary irreducible representation $\sigma$ of $G_0$ admits at least one Mackey parameter, and two different Mackey parameters for a given $\sigma$ must be conjugate under $K$.

It is perhaps a bit less natural to write down the definition of the Mackey-Higson bijection $\mathcal{M}: \widetilde{G} \to \widehat{G_0}$ rather than that of its inverse, as in \cite{AAMackey}; yet the information we shall need can be phrased as follows. 

If $\pi$ is an irreducible tempered representation of the reductive group $G$, we will say that $(\chi, \mu)$ is a Mackey parameter for $\pi$ when it is a Mackey parameter for the representation $\mathcal{M}(\pi)$ of $G_0$. 

Crucial to the next statement are the notion of tempered representation with \emph{real infinitesimal character}  (see for instance \cite[\S 3.1]{AAMackey} for the useful details), and, when $A$ is the split component of a Levi subgroup of $G$, the notion of \emph{$\mathfrak{a}$-regular} element in $\mathfrak{a}^\star$ (see \cite[\S 3.2]{AAMackey}).

\begin{lemm} \label{lemme:mackey} Let $\pi$ be an irreducible tempered representation of $G$. The pair $(\chi, \mu)$ is a Mackey parameter for $\pi$ if and only there is an equivalence
\[  \pi \simeq \ind_{P}^{G}\left( \sigma \otimes e^{i\chi}\right),\]
where the data $(P, \sigma)$ satisfy the following conditions:
\begin{itemize}
\item[$\bullet$] $P=LN = MAN$ is a parabolic subgroup of $G$, 
\item[$\bullet$] $\chi$ is an  \emph{$\mathfrak{a}$-regular} element of $\mathfrak{a}^\star$,
\item[$\bullet$]  $\sigma$ is a tempered representation of $L$ that has real infinitesimal character and whose restriction to $K \cap L = K_\chi$ admits $\mu$ as its lowest $(K \cap L)$-type.
\end{itemize}
\end{lemm}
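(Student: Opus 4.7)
The plan is to align the statement with the explicit recipe for $\mathcal{M}^{-1}: \widehat{G_0} \to \widetilde{G}$ given in \cite[\S 3--4]{AAMackey}: Lemma \ref{lemme:mackey} is essentially that construction read in the opposite direction, so the proof amounts to matching the two descriptions. First I would recall the recipe. Given a Mackey parameter $(\chi, \mu)$, with $\chi \in \pe^\star$ and $\mu \in \widehat{K_\chi}$, I identify $\pe^\star$ with $\pe$ through the Killing form and embed $\chi$ into a maximal abelian subspace $\mathfrak{a} \subset \pe$; I set $A = \exp(\mathfrak{a})$, $L = Z_G(A) = MA$, and choose a parabolic $P = MAN$ with $\chi \in \mathfrak{a}^+$. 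As analysed in \cite[\S 3.2]{AAMackey}, the identity $K_\chi = K \cap L$ amounts precisely to $\chi$ being $\mathfrak{a}$-regular. By Vogan's parametrisation of tempered representations with real infinitesimal character, recalled in \cite[\S 3.1]{AAMackey}, $\mu$ determines up to equivalence a unique tempered representation $\sigma$ of $L$ with real infinitesimal character whose lowest $(K \cap L)$-type is $\mu$. The Mackey--Higson map is then defined by
\[
\mathcal{M}^{-1}\Bigl(\ind_{K_\chi \ltimes \pe}^{G_0}(\mu \otimes e^{i\chi})\Bigr) \;=\; \ind_P^G(\sigma \otimes e^{i\chi}),
\]
whose right-hand side is irreducible by the $\mathfrak{a}$-regularity of $\chi$ (see e.g.\ \cite[Theorem 14.93]{Knapp}).

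Next I would deduce both implications of the lemma from this. For the ``if'' direction, I would note that if $\pi \simeq \ind_P^G(\sigma \otimes e^{i\chi})$ with $(P,\sigma,\chi)$ satisfying the three bulleted conditions, then the recipe just recalled shows directly that $\mathcal{M}(\pi) = \ind_{K_\chi \ltimes \pe}^{G_0}(\mu \otimes e^{i\chi})$, so $(\chi, \mu)$ is a Mackey parameter for $\pi$. For the ``only if'' direction, if $(\chi, \mu)$ is a Mackey parameter for $\pi$, then by definition $\mathcal{M}(\pi) \simeq \ind_{K_\chi \ltimes \pe}^{G_0}(\mu \otimes e^{i\chi})$; applying the recipe to $\mathcal{M}^{-1}(\mathcal{M}(\pi))$ produces data $(P, \sigma)$ satisfying all three bulleted properties together with $\pi \simeq \ind_P^G(\sigma \otimes e^{i\chi})$.

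The only mildly delicate point will be the match $K_\chi = K \cap L$ that bridges the ``flat'' Mackey picture on $\pe^\star$ with the Langlands data on $G$: for $\chi \in \mathfrak{a}^\star \subset \pe^\star$ that is $\mathfrak{a}$-regular, its $K$-stabilizer coincides with the centralizer of $\mathfrak{a}$ in $K$, hence with $K \cap M = K \cap L$. This compatibility and Vogan's uniqueness statement for $\sigma$ are both established in \cite[\S 3--4]{AAMackey}, so beyond careful unpacking of definitions no new input is required.
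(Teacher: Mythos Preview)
Your approach is exactly the one the paper takes: the lemma is not proved from scratch but is identified as a restatement of the construction of $\mathcal{M}^{-1}$ in \cite[\S 3]{AAMackey}, together with the two facts you single out (the identity $K_\chi = K\cap L$ for $\mathfrak{a}$-regular $\chi$, and Vogan's uniqueness theorem for tempered representations with real infinitesimal character and prescribed lowest $K$-type). The paper's own justification is equally brief and cites the same ingredients.

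One slip to fix in your write-up: in your description of the recipe you embed $\chi$ in a \emph{maximal} abelian subspace $\mathfrak{a}\subset\pe$ and set $L=Z_G(A)$. That gives the \emph{minimal} Levi subgroup, and then $\chi$ is $\mathfrak{a}$-regular only when $K_\chi=M_{\min}$, i.e.\ only for generic $\chi$. The actual construction in \cite[\S 3]{AAMackey} takes $L=Z_G(\chi)$ (the centralizer of $\chi$ itself) and lets $\mathfrak{a}$ be the Lie algebra of the split component of \emph{that} $L$; with this choice $\chi$ is automatically $\mathfrak{a}$-regular and $K\cap L = K_\chi$ holds for every $\chi$. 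Once you make this correction, your two implications go through exactly as you describe.
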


 The above statement depends on the fact $K \cap L = K_\chi$ whenever $\chi$ is $\mathfrak{a}$-regular, and on a theorem of Vogan (see \cite[Theorem 1.2]{VoganBranching}) that says that given $L$ and $\mu \in \widehat{K \cap L}$, there exists a (unique) $\sigma \in \widetilde{L}$ that satisfies the condition in the theorem. The fact that the statement is correct is a reformulation of parts of the construction of the correspondence in \cite[\S 3]{AAMackey}. 
\qed

\subsection{Discontinuity of $\mathcal{M}^{-1}: {\widehat{G_0} \to \widetilde{G}}$} Before we consider  the topology of $\widehat{G_0}$ in more detail, let us remark that we already know enough about $\widetilde{G}$ to verify that the Mackey bijection cannot be a homeomorphism.

\begin{prop} \label{disco} The bijection $\mathcal{M}^{-1}:{\widehat{G_0} \to \widetilde{G}}$ is never continuous, unless $G$ and $G_0$ are isomorphic. \end{prop}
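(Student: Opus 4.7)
The plan is to assume $G \not\simeq G_0$, which forces $G$ to be noncompact and hence to admit a proper parabolic subgroup $P = MAN$ with $A \neq \{e\}$, and then to exhibit a concrete failure of continuity along a family of principal series. Fix such a $P$, and fix an irreducible tempered representation $\delta$ of $M$ with real infinitesimal character (for instance, take $P$ minimal, so that $M$ is compact modulo a finite group, and $\delta$ any irreducible representation of $M$). Let $\mu$ denote the lowest $(K \cap M)$-type of $\delta$, and let $\Theta \in \mathcal{B}^t(G)$ be the component of $\widetilde{G}$ attached to the discrete pair $(L,\delta)$. For $\nu \in \mathfrak{a}^*$ in the open positive Weyl chamber, the induced representation $\pi_\nu := \ind_P^G(\delta \otimes e^{i\nu})$ is irreducible, tempered, and lies in $\widetilde{G}_\Theta$; by Proposition~\ref{vogan}, the lowest $K$-types of $\pi_\nu$ form the finite set $\mathcal{C}_\Theta$ of lowest $K$-types of $\ind_P^G(\delta)$.

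By Lemma~\ref{lemme:mackey}, the Mackey image of $\pi_\nu$ is
\[
\sigma_\nu \;:=\; \ind_{K_{\nu} \ltimes \pe}^{G_0}(\mu \otimes e^{i\nu}),
\qquad K_{\nu} = K \cap M.
\]
As a representation of $K$, $\sigma_\nu$ decomposes by Frobenius reciprocity as $\ind_{K \cap M}^K(\mu) = \bigoplus_{\tau \in \widehat{K}} \mathrm{Hom}_{K \cap M}(\mu, \tau|_{K \cap M}) \otimes \tau$; because $K/(K \cap M)$ is positive-dimensional (this is where $A \neq \{e\}$ enters), this is an infinite direct sum. The key topological input is Baggett's description of the Fell topology on $\widehat{G_0}$ recalled in \S\ref{sec:cartan}: as $\nu \to 0$ in $\mathfrak{a}^*$, every point $(0,\tau) \in \widehat{G_0}$ with $\tau \in \widehat{K}$ occurring in $\ind_{K\cap M}^K(\mu)$ is a Fell-limit of $\sigma_\nu$. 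Since $\mathcal{C}_\Theta$ is finite while the set of such $\tau$ is infinite, I can select one $\tau$ occurring in $\ind_{K \cap M}^K(\mu)$ with $\tau \notin \mathcal{C}_\Theta$.

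With that choice in hand, $\sigma_\nu \to (0, \tau)$ in $\widehat{G_0}$, yet I claim $\pi_\nu = \mathcal{M}^{-1}(\sigma_\nu)$ cannot converge to $\mathcal{M}^{-1}((0,\tau))$ in $\widetilde{G}$. Indeed, by Lemma~\ref{lemme:mackey} the representation $\mathcal{M}^{-1}((0,\tau))$ is a tempered representation of $G$ admitting $\tau$ as a lowest $K$-type; as $\tau \notin \mathcal{C}_\Theta$, Proposition~\ref{vogan} forces $\mathcal{M}^{-1}((0,\tau)) \notin \widetilde{G}_\Theta$. On the other hand, the net $(\pi_\nu)$ is contained in the closed subset $\widetilde{G}_\Theta$ of $\widetilde{G}$ (closedness is established in the proof of Proposition~\ref{connexes}), so every limit point of $(\pi_\nu)$ must lie in $\widetilde{G}_\Theta$. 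This contradicts continuity of $\mathcal{M}^{-1}$ at $(0,\tau)$.

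The main obstacle is the topological step invoked above: verifying, from Baggett's theorem, that in the Fell topology on $\widehat{G_0}$ every $\tau$ appearing in the $K$-decomposition of $\sigma_\nu$ really produces a limit $(0,\tau)$ of $\sigma_\nu$ as $\nu \to 0$. Once \S\ref{sec:cartan} makes the topology explicit this should be immediate, essentially because matrix coefficients of the isotypic components of $\sigma_\nu$ converge to matrix coefficients of $\tau$ extended trivially to $\pe$; everything else in the argument reduces to a direct combination of Lemma~\ref{lemme:mackey}, Proposition~\ref{vogan}, Frobenius reciprocity, and closedness of connected components of $\widetilde{G}$.
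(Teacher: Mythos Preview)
Your argument is correct and follows essentially the same route as the paper's proof: let the continuous parameter of a principal series family tend to zero, observe via Baggett's description that on the $\widehat{G_0}$ side the limit points include all $K$-types occurring in $\ind_{K\cap M}^K(\mu)$, and contrast this with the fact that on the $\widetilde{G}$ side the family is trapped in a single closed component $\widetilde{G}_\Theta$. The paper phrases the contradiction as ``connected set maps to disconnected set'' using specifically the spherical principal series (so that the trivial representation of $G_0$ is exhibited as a discontinuity point), whereas you phrase it sequentially and allow an arbitrary discrete parameter $\delta$; these are cosmetic differences, and your invocation of Baggett's criterion for the limit points $(0,\tau)$ is exactly what the paper verifies by a direct matrix-coefficient computation.
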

An isomorphism between $G$ and $G_0$ exists if and only if $G$ is a direct product of a compact group and an abelian vector group, in other words (by \cite{KnappBeyond}, Proposition 7.27) when the derived group of $G$ is compact. When $G$ and $G_0$ are not isomorphic, we shall see, for instance, that the trivial representation of $G_0$ is always a discontinuity point of $\mathcal{M}^{-1}$.

\begin{proof} Suppose $G$ and $G_0$ are not isomorphic. Choose a  \emph{regular} element $\chi$ in $\pe^\star$ (see \cite[p.555]{KnappBeyond}, for the existence of such an element in our case). Consider the induced representation $\sigma=\inde_{M \ltimes \pe}^{G_0}(\mathbf{1} \otimes e^{i\chi})$, where $M=Z_K(\chi)$ is the centralizer of $\chi$ in $\pe^\star$.

For every $\alpha>0$, let us consider the representation $\sigma_{\alpha}=\inde_{M \ltimes \pe}^{G_0}(\mathbf{1} \otimes e^{i\frac{\chi}{\alpha}})$, and observe what happens as $\alpha$ goes to infinity. If  $\widehat{K}_M$ is the set of those $\lambda \in \widehat{K}$ that occur in $\mathbf{L}^2(K/M)$, and if we identify the elements of $\widehat{K}_M$ with representations of $G_0$ in which $\pe$ acts trivially, then we will check that  ${B}= \left\{\sigma_\alpha, \alpha \in [1, +\infty[\right\} \cup \widehat{K}_M$ is a  \emph{connected} subset of $\widehat{G_0}$.

Recall that we can realize $\sigma_\alpha$ as a representation that acts on $\mathbf{L}^2(K/M)$, in which $K$ acts through the left regular representation $\mathcal{L}$ on $\mathbf{L}^2(K/M)$, and $v \in \pe$ acts through $f \mapsto \left[ u \mapsto e^{i \langle{\chi,\text{Ad}(u^{-1})v\rangle}}f(u)\right]$. 

Suppose $\xi, \xi'$ are vectors in $\mathbf{L}^2(K/M)$ that both lie in the isotypical subspace of  $\mathbf{L}^2(K/M)$ for a class $\lambda \in \widehat{K}$. Then the matrix element of $\sigma_\alpha$ attached to $\xi$ and $\xi'$ $-$ a complex-valued function on $K \times \pe$ $-$ reads  $(k,v) \mapsto \int_{K} e^{i \langle \frac{\chi}{\alpha}, (\text{Ad}(u^{-1}) v)\rangle} \xi(u) \xi'(k^{-1}u) du$, where we viewed $\xi$ and $\xi'$ as right-$M$-invariant functions on $K$.

When $\alpha$ goes to infinity, that map converges (uniformly on compact subsets of  $K \times \pe$) to the map $(k,v) \mapsto \langle \xi, \mathcal{L}(k) \xi'\rangle_{\mathbf{L}^2(K/M)}$, which is a matrix element for the representation of  $G_0$ that extends $\lambda \in \widehat{K}_M$. This proves that ${B}$ is connected.

Now, the image of $B$ under $\mathcal{M}_{\widehat{G_0} \to \widetilde{G}}$ is not connected: the image of $\left\{\sigma_\alpha, \alpha \in [1, +\infty[\right\} \cup \left\{ \text{triv}_{\widehat{K}}\right\}$ is contained in the \emph{spherical principal series}, which is by itself a connected component\footnote{It is the connected component associated with a discrete pair of the form  $(L_{\min}, \mathbf{1})$ where $L_{\min}$ is the Levi factor for a Borel subgroup of $G$ and $\mathbf{1}$ is the trivial representation of $L_{\min}$. Every representation in the spherical principal series contains the trivial $K$-type.} of $\widetilde{G}$. By contrast, the nontrivial elements of $\widehat{K}_M$ map to irreducible representations that do not contain the trivial $K$-type (since the Mackey-Higson bijection preserves lowest $K$-types), and are thus contained in other connected components of  $\widetilde{G}$. Therefore the bijection $\mathcal{M}^{-1}:{\widehat{G_0} \to \widetilde{G}}$  sends the connected set $B$ to a disconnected subset of $\widetilde{G}$, and we can conclude that $\mathcal{M}^{-1}$ is not continuous.
 \end{proof}

\subsection{Baggett's description of the topology of $\widehat{G_0}$  }

\begin{theo} \label{Baggett} Let $(\sigma_n)_{n \in \N}$ be a sequence of unitary irreducible representations of $G_0$, and  $\sigma_\infty$ be a unitary irreducible representation of  $G_0$. For every $n$ in $\N  \cup\{ \infty\}$, fix a Mackey parameter $(\chi_n, \mu_n)$ for $\sigma_n$. 

The representation $\sigma_\infty$ is a limit point of $(\sigma_n)_{n \in \N}$ in $\widehat{G_0}$ if and only if $(\sigma_n)_{n \in \N}$ admits a subsequence  $(\sigma_{n_j})_{j \in \N}$ that satisfies the following conditions :

\begin{enumerate}[(i)]
\item as $j$ goes to infinity, $\chi_{n_j}$ goes to $\chi_\infty$ in $\pe^\star$;
\item the sequence $\left((K_{\chi_{n_j}}, \mu_{n_j})\right)_{j \in \N}$ is actually a constant $(K_{{\lim}}, \mu_{{\lim}})$ in which $K_{\lim}$ is a \emph{subgroup} of $K_{\chi_\infty}$;  
\item the induced representation $\ind_{K_{\lim}}^{K_{\chi_{\infty}}}(\mu_{{\lim}})$ contains $\mu_{{\infty}}$.
\end{enumerate}
 \end{theo}
 
\begin{proof}[Remark on the statement]
The result is due to Baggett \cite{Baggett}, who works with the more general setting of a semidirect product $H \ltimes N$ where $H$ is a compact group and $N$ a locally compact abelian group. His main result does not look exactly like the above: there is the anecdotal point that he needs to use nets rather than sequences, and the more serious fact that he calls in a topology (introduced by Fell \cite{Fell}) on the set  $\mathcal{A}(H)$ of pairs $(J, \tau)$, where $J$ is a closed subgroup of $H$ and $\tau$ is an irreducible representation of $J$. To see how this translates into Theorem \ref{Baggett} in our case, let us first give a statement closer in spirit to  \cite[Theorem 6.2-A]{Baggett}. For the Cartan motion group $G_0=K \ltimes \pe$, using the notations of Theorem \ref{Baggett}, Baggett proves that $\sigma_\infty$ is a limit point of $(\sigma_n)_{n \in \N}$ if and only if there is a subsequence $(\sigma_{n_j})_{j \in \N}$ such that: 
\begin{enumerate}[(a)]
\item as $j$ goes to infinity, $\chi_{n_j}$ goes to  $\chi_\infty$;
\item $(K_{\chi_{n_j}}, \mu_{n_j})$ goes, in the Fell space  $\mathcal{A}(K)$, to a pair  $(K_{\text{lim}}, \mu_{\text{lim}})$ where $K_{\lim}$ is a  \emph{subgroup of~$K_{\chi_{\infty}}$},
\item the induced representation $\inde_{K_{\lim}}^{K_{\chi_{\infty}}}(\mu_{\text{lim}})$ contains $\mu_{{\infty}}$.
\end{enumerate}
If we fix a minimal parabolic subgroup $P_{\min} = M_{\min} A_{\min} N_{\min}$, we can choose the Mackey parameters $(\chi_n, \mu_n)$ in such a way that $\chi_n$ always belongs to the closed Weyl chamber $\mathfrak{a}_{\min}^+$ that comes with $P_{\min}$. But then, since there is only a  \emph{finite} number of subgroups of $K$ that can arise as the stabilizer of  some  $\chi$ in $\mathfrak{a}_{\min}^+$, there is only a finite number of subgroups that can arise as $K_{\chi_n}$ for some $n$. In Baggett's statement above, we can thus replace (b) with
\noindent \begin{itemize}
\item[(b')] the sequence $\left((K_{\chi_{n_j}}, \mu_{n_j})\right)_{j \in \N}$ is eventually constant and $K_{\chi_{n_j}}$ is actually a subgroup of~$K_{\chi_{\infty}}$.
\end{itemize}
This leads to the  statement in Theorem \ref{Baggett}.\end{proof}

 \section{Continuity of the Mackey-Higson bijection} \label{sec:resultat}~

 \begin{theo} \label{continuite} The bijection $ \mathcal{M} \ : \ \widetilde{G} \to \widehat{G_0}$ is continuous.  \end{theo}

What we will actually check is that if $\pi_{\infty}$ is an element of $\widetilde{G}$ and if $(\pi_n)_{n \in \N}$ is a sequence of irreducible tempered representations of $G$ that admits $\pi_\infty$ as a limit point, then there exists a subsequence of $(\pi_n)_{n \in \N}$ whose image under $\mathcal{M}$ admits $\mathcal{M}(\pi_{\infty})$ as a limit point.

\subsection{Preliminaries on the reductive side}

Write  $\widetilde{G}_\Theta$ for the connected component of $\widetilde{G}$ that contains $\pi_{\infty}$. There is a subsequence of $(\pi_n)_{n \in \N}$ whose terms all lie in $\widetilde{G}_{\Theta}$; since we will have a handful of successive extractions to do hereafter, we will replace $(\pi_n)_{n \in \N}$ by such a subsequence without changing the notation. 

We now take up the setting of Notations \ref{notations} and consider a parabolic subgroup $LN$, a ``discrete series'' representation $\sigma$ of (the compactly generated part of) $L$, and parameters  $(\nu_k)_{k \in \N \cup \{\infty\}}$ in $\mathfrak{a}^+$, in such a way that  $\pi_\infty$ occurs in $\inde_{LN}^G(\sigma \otimes e^{i\nu_{\infty}})$ and  for each $n$, $\pi_n$ occurs in $\inde_{LN}^G(\sigma \otimes e^{i\nu_n})$.

By Corollary \ref{conv_param_cont}, after passing to a subsequence if necessary, we may assume that  $\nu(\pi_n)$ goes to $\nu(\pi_{\infty})$ in $\widehat{\mathfrak{a}}/W$. Since the Weyl group  $W$ is finite and $\nu_n$ (resp. $\nu_\infty$) is a representative of  $\nu(\pi_n)$ (resp. $\nu(\pi_\infty)$) in $\mathfrak{a}^+$, we may assume, perhaps after another passage to subsequences, that  $\nu_n$ in fact goes to $\nu_\infty$ in $\mathfrak{a}^+$.

\subsubsection{Strata in the Weyl chamber} For each $n$ in $\N \cup \{\infty\}$, let us consider 
\[ S_n = \left\{ \alpha \in \Delta^+ \, : \, \langle \alpha, \nu_n \rangle > 0 \right\}.\] That subset of $\Delta^+$ keeps track of the ``$\mathfrak{a}$-regularity of $\nu_n$'': when  $S_n = \emptyset$ we have $\nu_n = 0$, whereas $S_n = \Delta^+$ if and only if  $\nu_n$ is $\mathfrak{a}$-regular.

\begin{enonce}{Observation} \label{obs} Suppose $\Sigma$ is a subset of $\Delta^+$. If $\left\{ n \in \N \, : \,  S_n = \Sigma \right\}$ is infinite, then  $\Sigma$ contains $S_\infty$. \end{enonce}

This is because when $\alpha \in \Delta^+$ lies in $S_\infty$, we have $\langle \alpha, \nu_\infty \rangle >0$, so that eventually $\langle \alpha, \nu_n \rangle >0 $ and $\alpha \in S_n$.\qed\\

Let us now fix a subset $\Sigma \subset \Delta^+$ such that $\left\{ n \in \N \, : \, S_n = \Sigma \right\}$ is infinite. After passing again to a subsequence if necessary, we may assume that  $S_n = \Sigma$ for all $n$. We now observe that the centralizer\footnote{For the coadjoint action, where we view $\nu_n \in \mathfrak{a}^\star$ as an element of  $\mathfrak{g}^\star$.} $L_{\nu_n} = Z_G(\nu_n)$ of $\nu_n$ in $G$ does not depend on $n$. See for instance the details on the construction of the centralizer given in   \cite[proof of Theorem 3.2(a)]{AAMackey}: if $L_{\min} = M_{\min} A_{\min}$ is a minimal Levi subgroup of $G$ contained in $L$, then $L_{\nu_n}$ is generated by $M_{\min}$  and the root subgroups for roots  $\alpha$ that lie in $S_n = \Sigma$. These do not depend on $n$.

We will henceforth write $L_{\text{seq}}$ for the common centralizer of all $\nu_{n}$'s, $n \in \N$. Given  Observation \ref{obs}, if we write  $L_{\infty}$ for the centralizer of $\nu_{\infty}$ in $G$, then
\begin{equation} \label{inc_levi} L \subset L_{\text{seq}} \subset L_{\infty}\end{equation}
and if  $A_{\text{seq}} $ and $A_{\infty} $ denote the split components of $L_{\text{seq}}$ and $L_{\infty}$, then 
 \begin{equation} \label{inc_a} A_\infty \subset A_{\text{seq}} \subset A.\end{equation}
We remark, following the proof of \cite[Theorem 3.2]{AAMackey}, that $L$ is in fact a Levi subgroup in the reductive group $L_{\text{seq}}$, and that $L_{\text{seq}}$ is itself a Levi subgroup of $L_{\infty}$. With apologies for the clumsy notation, let us write  $\widetilde{N}$ for a subgroup of $L_{\text{seq}}$ such that $L \widetilde{N}$ is a parabolic subgroup of $L_{\text{seq}}$ with Levi factor $L$, and $N_{\text{seq}}$ for a subgroup of $L_\infty$ such that $L_{\text{seq}}N_{\text{seq}}$ is a parabolic subgroup of $L_{\infty}$ with Levi factor $L_{\text{seq}}$. We will also need the maximal compact subgroups 
\begin{equation} K_{\text{seq}} = K \cap L_{\text{seq}} \quad \text{and} \quad K_\infty = K \cap L_\infty.\end{equation}

\subsubsection{Mackey parameters} \label{jeu_ecriture} The above observations make it easy to pin down the Mackey parameters for each of the $\pi_n$s that remain at this stage (after the extractions already performed).

Recall from Vogan's work that for every irreducible representation $\mu$ of ${K_{\text{{seq}}}}$, there exists a unique irreducible tempered representation of   ${L_{\text{{seq}}}}$ that has real infinitesimal character and lowest $K_{\text{seq}}$-type $\mu$ (see \cite[Theorem 1.2]{VoganBranching}). Write $\mathbf{V}_{L_{\text{\emph{seq}}}}(\mu_n)$ for that representation. The  $K_{\text{seq}}$-type $\mu$ is the unique lowest $K_{\text{seq}}$-type of $\mathbf{V}_{L_{\text{\emph{seq}}}}(\mu_n)$. 

\begin{lemm} \label{mac} For every $n$ in $\N$, there exists $\mu_n$ in $\widehat{K_{\text{\emph{seq}}}}$ such that  $\pi_n$ is equivalent with the irreducible representation  $\ind_{P_{\text{\emph{seq}}}}^G\left(\mathbf{V}_{L_{\text{\emph{seq}}}}(\mu_n) \otimes e^{i\nu_{n}}\right)$. Furthermore, the representation $\mu_n$ is uniquely determined by the set of lowest $K$-types of  $\pi_n$. \end{lemm}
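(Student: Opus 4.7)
My plan is to combine induction in stages with Vogan's theorem parametrizing tempered representations (of real reductive groups) with real infinitesimal character by their lowest compact types (\cite[Theorem 1.2]{VoganBranching}). Let $P_{\text{seq}}$ denote the parabolic subgroup of $G$ with Levi factor $L_{\text{seq}}$ such that $P \subset P_{\text{seq}}$ (the one obtained by shrinking the unipotent radical of $P$ so as to match the enlarged Levi). Induction in stages yields
\[
\ind_{LN}^G(\sigma \otimes e^{i\nu_n}) \simeq \ind_{P_{\text{seq}}}^G\!\left(\ind_{L\widetilde{N}}^{L_{\text{seq}}}(\sigma \otimes e^{i\nu_n})\right).
\]
Because $L_{\text{seq}} = Z_G(\nu_n)$, the element $\nu_n$ lies in the natural copy of $\mathfrak{a}_{\text{seq}}^\star$ inside $\mathfrak{a}^\star$; hence $e^{i\nu_n}$ restricts to a unitary character of $A_{\text{seq}}$, extends to a character of $L_{\text{seq}}$ trivial on its compactly-generated factor, and can be pulled outside the inner induction:
\[
\ind_{L\widetilde{N}}^{L_{\text{seq}}}(\sigma \otimes e^{i\nu_n}) \simeq \ind_{L\widetilde{N}}^{L_{\text{seq}}}(\sigma) \otimes e^{i\nu_n}.
\]

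Next, since $Z_G(\nu_n) = L_{\text{seq}}$ the parameter $\nu_n$ is $\mathfrak{a}_{\text{seq}}$-regular, so the standard irreducibility criterion for tempered parabolic induction with regular continuous parameter (the same \cite[Theorem 14.93]{Knapp} used in the proof of Proposition~\ref{connexes}, now applied to $P_{\text{seq}}$) forces that for every irreducible summand $\tau$ of $\ind_{L\widetilde{N}}^{L_{\text{seq}}}(\sigma)$, the representation $\ind_{P_{\text{seq}}}^G(\tau \otimes e^{i\nu_n})$ is irreducible. Consequently $\pi_n \simeq \ind_{P_{\text{seq}}}^G(\tau_n \otimes e^{i\nu_n})$ for one specific irreducible constituent $\tau_n$ of $\ind_{L\widetilde{N}}^{L_{\text{seq}}}(\sigma)$. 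The representation $\tau_n$ is tempered and irreducible, and has real infinitesimal character because, by Notations~\ref{notations}, $\sigma|_A$ is already trivial and the only imaginary continuous parameter in sight (namely $\nu_n$) has been peeled off. Vogan's theorem then gives $\tau_n = \mathbf{V}_{L_{\text{seq}}}(\mu_n)$ for the unique $\mu_n \in \widehat{K_{\text{seq}}}$ that is its lowest $K_{\text{seq}}$-type, which establishes the displayed formula.

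For the uniqueness of $\mu_n$ given the set of lowest $K$-types of $\pi_n$, I would appeal to Vogan's ``bottom-layer'' principle: under the induction-in-stages construction above, the lowest $K$-types of $\ind_{P_{\text{seq}}}^G(\mathbf{V}_{L_{\text{seq}}}(\mu) \otimes e^{i\nu})$ are produced from $\mu$ by a fixed combinatorial rule depending only on the inclusion $K_{\text{seq}} \subset K$ (this is precisely the rule that underlies Lemma~\ref{lemme:mackey}, read from the tempered side to the Mackey side, and it shows incidentally that $(\nu_n, \mu_n)$ is \emph{the} Mackey parameter of $\pi_n$). The rule is injective on the set of those $\mu \in \widehat{K_{\text{seq}}}$ that actually arise as the lowest $K_{\text{seq}}$-type of some Vogan representation $\mathbf{V}_{L_{\text{seq}}}(\mu)$, so the set of lowest $K$-types of $\pi_n$ determines $\mu_n$ uniquely. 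The step I expect to demand most care is the extraction of the $e^{i\nu_n}$ twist and the verification that $\tau_n$ really has real infinitesimal character; once this bookkeeping is pinned down, the remainder is a direct combination of Vogan's classification, the irreducibility criterion, and the tools already recalled in \S\ref{sec:temp} and \S\ref{sec:cartan}.
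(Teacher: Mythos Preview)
Your argument is correct and follows the same route as the paper: induction in stages through $P_{\text{seq}}$, extraction of the central twist $e^{i\nu_n}$, irreducibility of each $\ind_{P_{\text{seq}}}^G(\tau\otimes e^{i\nu_n})$ from the $\mathfrak{a}_{\text{seq}}$-regularity of $\nu_n$, and identification of $\tau_n$ via Vogan's classification by its unique lowest $K_{\text{seq}}$-type. The only refinement the paper makes is in the citation for the irreducibility step: since the inducing datum $\tau$ is in general only tempered with real infinitesimal character (possibly a limit of discrete series) rather than a genuine discrete series representation, the paper invokes \cite[Lemma~3.2(5)]{VoganLanglands} rather than \cite[Theorem~14.93]{Knapp}.
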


\begin{proof} Recall that $\pi_n$ occurs in $\inde_{LN}^G(\sigma \otimes e^{i\nu_n})$ and remark, as in the proof of  \cite[Theorem 3.2(c)]{AAMackey}, that if $L_{\text{seq}}N'$ is a parabolic subgroup of $G$ with Levi factor $L_{\text{seq}}$, then 
\begin{equation}\inde_{LN}^G(\sigma \otimes e^{i\nu_n}) \simeq \inde_{L_{\text{seq}}}^G\left( \inde_{L \widetilde{N}}^{L_{\text{seq}}} \left( \sigma \otimes \mathbf{1}\right) \otimes e^{i\nu_n} \right) .\end{equation}
The representation $\varpi=\inde_{L \widetilde{N}}^{L_{\text{seq}}} \left( \sigma \otimes \mathbf{1}\right)$ is tempered, has real infinitesimal character and a finite number of irreducible components. Since $\nu_n$ is $\mathfrak{a}_{\text{seq}}$-regular, for every irreducible component $\tau$ of $\varpi$, the representation $\inde_{L_{\text{seq}}N}^G(\tau \otimes e^{i\nu_n})$ is in fact irreducible (see for instance  \cite[Lemma 3.2(5)]{VoganLanglands} and the discussion in \cite[p. 8]{AAMackey}). Furthermore, its lowest $K$-types are entirely determined by $\tau$. Thus, the lowest $K$-types of $\pi_n$ make it possible to pin down the one irreducible constituent  $\tau_n$ for which $\pi_n  \simeq \inde_{L_{\text{seq}}N}^G(\tau_n \otimes e^{i\nu_n})$. Writing $\mu_n$ for the unique lowest $K_{\text{seq}}$-type of $\tau_n$, we obtain $\tau_n \simeq \mathbf{V}_{L_{\text{\emph{seq}}}}(\mu_n)$, as desired.  \end{proof}

In the above statement, $\mu_n$ may depend on $n$; but it determines the set of lowest $K$-types of $\pi_n$, which is a subset of the class $\mathcal{C}_\Theta$ described in \S \ref{vogan}, and different values for $\mu_n$ lead to different sets of lowest $K$-types \cite[Lemma 4.2]{AAMackey}. So there are only a finite number of possibilities for the element $\mu_n$ in $\widehat{K_{\text{seq}}}$. After a new extraction, we obtain the following result:   

\begin{lemm} There exists a subsequence $(\pi_{n_j})_{j \in \N}$ of $(\pi_n)_{n \in \N}$ with the property that $\mu_{n_j}$ does not depend on $j$, so that there exists $\mu_{\text{\emph{seq}}} \in \widehat{K_{\text{\emph{seq}}}}$ such that: $\forall j \in \N$, $\pi_{n_j} \simeq \ind_{P_{\text{\emph{seq}}}}^G\left(\mathbf{V}_{L_{\text{\emph{seq}}}}(\mu_{\text{\emph{seq}}}) \otimes e^{i\nu_n}\right)$. \end{lemm}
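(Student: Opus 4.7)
The plan is essentially to formalize the pigeonhole argument that is sketched in the paragraph immediately preceding the statement. The key inputs are the finiteness of the class $\mathcal{C}_\Theta$ of Vogan lowest $K$-types attached to the block $\Theta$ and the injectivity already recorded in Lemma \ref{mac}.

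First, I would invoke Proposition \ref{vogan}: since every $\pi_n$ lies in the connected component $\widetilde{G}_\Theta$, the set of its lowest $K$-types is a subset of $\mathcal{C}_\Theta$. The set $\mathcal{C}_\Theta$ itself is finite, because it is defined as the set of lowest $K$-types of a single induced representation $\ind_{P}^G(\sigma)$, which has only finitely many such types. Consequently, as $n$ varies, the family of lowest $K$-type sets of the $\pi_n$ ranges within the finite power set $\mathcal{P}(\mathcal{C}_\Theta)$.

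Second, I would combine Lemma \ref{mac} with \cite[Lemma 4.2]{AAMackey}: the former asserts that $\mu_n \in \widehat{K_{\text{seq}}}$ is uniquely determined by the set of lowest $K$-types of $\pi_n$, while the latter ensures that distinct values of $\mu_n$ yield distinct sets of lowest $K$-types. Together they imply that the assignment $n \mapsto \mu_n$ takes values in a finite subset of $\widehat{K_{\text{seq}}}$.

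Finally, the pigeonhole principle produces some $\mu_{\text{seq}} \in \widehat{K_{\text{seq}}}$ and an infinite subsequence $(n_j)_{j \in \N}$ along which $\mu_{n_j} = \mu_{\text{seq}}$. Substituting this constant value into the formula of Lemma \ref{mac} gives the desired equivalence. I do not expect any real obstacle here: all the substantive work has already been performed in Lemma \ref{mac} and in Vogan's description of $\mathcal{C}_\Theta$; the remaining step is a bookkeeping extraction.
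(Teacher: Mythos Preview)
Your proposal is correct and follows exactly the argument the paper gives in the paragraph immediately preceding the Lemma: the set of lowest $K$-types of each $\pi_n$ lies in the finite set $\mathcal{C}_\Theta$, the map $\mu_n \mapsto \{\text{lowest $K$-types of }\pi_n\}$ is injective by \cite[Lemma 4.2]{AAMackey}, and pigeonhole yields the constant subsequence. There is nothing to add.
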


If we replace $(\pi_n)_{n \in \N}$ by the above subsequence and take up the notations of \S \ref{param_mackey}, we now know from Lemma \ref{lemme:mackey} that for each $n$, a Mackey parameter for (the new) $\pi_n$ is $(\nu_n, \mu_\text{seq})$.

\subsubsection{Remark on the lowest  $K$-types of $\pi_{\infty}$.} At this stage, since each $\nu_n$ is $\mathfrak{a}_{\text{seq}}$-regular, we know that the set of lowest $K$-types of   $\pi_n$ does not depend on $n$: it is determined by the pair  $(K_{\text{seq}}, \mu_{\text{seq}})$.  Example \ref{ex_lim} shows that $\pi_\infty$ does not necessarily have the exact same set of lowest $K$-types as the $\pi_n$s: if $G = \SL(2,\R)$ and $\pi_\infty$ is a limit of discrete series, then $\pi_\infty$ has a unique lowest $\SO(2)$-type, but  $\pi_\infty$ is a limit point of the nonspherical principal series, which consists of representations having two distinct lowest $\SO(2)$-types (one of which is that of $\pi_{\infty}$).

\begin{lemm} \label{ktype_commun} Suppose $\mathcal{C}_{\text{\emph{seq}}} \subset \widehat{K}$ is the set of lowest $K$-types common to all $\pi_n$, $n \in \N$. Then the set of  lowest $K$-types of  $\pi_\infty$ is contained in $\mathcal{C}_{\text{\emph{seq}}}$. \end{lemm}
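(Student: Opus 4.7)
The plan is to show that $\pi_\infty$ occurs as an irreducible constituent of $\ind_{P_{\text{seq}}}^G\bigl(\mathbf{V}_{L_{\text{seq}}}(\mu_{\text{seq}}) \otimes e^{i\nu_\infty}\bigr)$, and then to appeal to Vogan's analysis of the lowest $K$-types of a (possibly reducible) tempered standard module. This combines a continuity-of-induction argument with known information on constituents.

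For the first point, I would realize each $\pi_n = \ind_{P_{\text{seq}}}^G\bigl(\mathbf{V}_{L_{\text{seq}}}(\mu_{\text{seq}}) \otimes e^{i\nu_n}\bigr)$ in the compact picture, exactly as in the footnote to the proof of Proposition \ref{connexes}: the Hilbert space carrying the induced representation is independent of $\nu$, and the matrix coefficients of $\ind_{P_{\text{seq}}}^G\bigl(\mathbf{V}_{L_{\text{seq}}}(\mu_{\text{seq}}) \otimes e^{i\nu}\bigr)$ depend continuously on $\nu$. Since $\nu_n \to \nu_\infty$ in $\mathfrak{a}_{\text{seq}}^\star$ and matrix coefficients of $\pi_\infty$ are uniform-on-compacta limits of matrix coefficients of the $\pi_n$, they arise as matrix coefficients of $\ind_{P_{\text{seq}}}^G\bigl(\mathbf{V}_{L_{\text{seq}}}(\mu_{\text{seq}}) \otimes e^{i\nu_\infty}\bigr)$, which forces $\pi_\infty$ to be an irreducible subquotient. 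The compact picture moreover shows that the $K$-types of $\ind_{P_{\text{seq}}}^G\bigl(\mathbf{V}_{L_{\text{seq}}}(\mu_{\text{seq}}) \otimes e^{i\nu}\bigr)$, together with their multiplicities, are independent of $\nu$. Since for the $\mathfrak{a}_{\text{seq}}$-regular value $\nu_n$ the induced module is irreducible and equal to $\pi_n$, the set of $K$-types of minimal Vogan norm occurring anywhere in $\ind_{P_{\text{seq}}}^G\bigl(\mathbf{V}_{L_{\text{seq}}}(\mu_{\text{seq}}) \otimes e^{i\nu_\infty}\bigr)$ is precisely $\mathcal{C}_{\text{seq}}$.

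To conclude, I would invoke Vogan's theorem that the lowest $K$-types of each irreducible constituent of a tempered standard module whose inducing data are tempered with real infinitesimal character are among the minimal-Vogan-norm $K$-types of the whole induced module (cf.\ \cite{Vogan85, VoganSpeh}). Applied to the constituent $\pi_\infty$, this yields the desired inclusion of its lowest $K$-types into $\mathcal{C}_{\text{seq}}$. The main technical input is this very last step: as Example \ref{ex_lim} already illustrates for $\SL(2,\R)$, different constituents of a reducible tempered standard module may \emph{a priori} pick up different subsets of the minimal-norm $K$-types of the whole, so one could worry that $\pi_\infty$ acquires lowest $K$-types not present in $\mathcal{C}_{\text{seq}}$. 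It is precisely Vogan's analysis of lowest $K$-types for such standard modules that rules this out; every other ingredient is the continuity of induction already exploited in \S \ref{composantes_connexes}.
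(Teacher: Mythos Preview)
Your overall strategy is sound and is, in fact, essentially equivalent to the lemma once everything is in place. But the crucial step --- showing that $\pi_\infty$ occurs as a subquotient of $\ind_{P_{\text{seq}}}^G\bigl(\mathbf{V}_{L_{\text{seq}}}(\mu_{\text{seq}}) \otimes e^{i\nu_\infty}\bigr)$ --- is not justified by the argument you give. You invoke two facts: (A) for \emph{fixed} vectors $\xi,\eta$ in the common Hilbert space, the coefficient $g\mapsto\langle\xi,\pi_\nu(g)\eta\rangle$ depends continuously on $\nu$; and (B) every coefficient of $\pi_\infty$ is a uniform-on-compacta limit of coefficients $\langle\xi_n,\pi_{\nu_n}(g)\eta_n\rangle$ with $\xi_n,\eta_n$ depending on $n$. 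These do not combine to say that a coefficient of $\pi_\infty$ equals some $\langle\xi,\pi_{\nu_\infty}(g)\eta\rangle$: weak limits of the $\xi_n,\eta_n$ may well be zero even when the coefficients converge to something nonzero. This is a genuine gap, and in fact the statement you are trying to prove here (that $\pi_\infty$ comes from the $\mu_{\text{seq}}$-summand rather than from some other constituent $\mu'$ of $\ind_{L\widetilde N}^{L_{\text{seq}}}(\sigma)$) is essentially the content of the lemma itself, so it cannot be obtained for free from continuity of induction.

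The paper's proof avoids this circularity by working entirely on the $K$-side and never asserting that $\pi_\infty$ sits in the specific induced module. The argument is elementary: fix a $K$-type $\lambda$ with $\norme{\lambda}_{\widehat K}\le\norme{\mathcal C_{\text{seq}}}_{\widehat K}$ but $\lambda\notin\mathcal C_{\text{seq}}$. Then $\lambda$ does not occur in any $\pi_n$, so for every matrix coefficient $c_n$ of $\pi_n$ the convolution $\chi_\lambda\star(c_n)_{|K}$ vanishes identically. Since any matrix coefficient $c$ of $\pi_\infty$ is a uniform-on-$K$ limit of such $c_n$, one gets $\chi_\lambda\star c_{|K}=0$, hence $\lambda$ does not occur in $\pi_\infty$. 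Combined with Proposition~\ref{vogan} (which forces the lowest $K$-types of $\pi_\infty$ to have norm exactly $\norme{\mathcal C_{\text{seq}}}_{\widehat K}$), this gives the inclusion. Your invocation of Vogan's result on constituents of standard modules would then be unnecessary: the whole argument stays at the level of $K$-characters and Fell convergence.
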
 
\begin{proof} We first recall that if $\lambda$ is a class in $\widehat{K}$ and $\chi_\lambda: K \to \C$ is its character, there is a simple criterion for ascertaining that an irreducible tempered representation  $\pi$ of $G$  does \emph{not} contain $\lambda$ upon restriction to $K$: one needs only check that for every matrix element  $c: G \to \C$ of $\pi$, the convolution product\footnote{This is  the convolution product over $K$: the function $k \mapsto  \int_K \chi_\lambda(ku^{-1})c(u) du$ from $K$ to $\C$.
} $\chi_\lambda \star \left( c_{|K}\right)$ vanishes.

We also recall that the partial ordering on $\widehat{K}$ used to define the notion of lowest $K$-type can be built from a positive-valued function $\norme{\cdot}_{\widehat{K}}$ on $\widehat{K}$. All  classes $\lambda$ in $\mathcal{C}_\text{seq}$ have the same norm $\norme{\lambda}_{\widehat{K}}$ ; write $\norme{\mathcal{C}_{\text{seq}}}_{\widehat{K}}$ for the common value. 

Consider then
\begin{itemize}
\item[$\bullet$]  a $K$-type $\lambda$ such that $\norme{\lambda}_{\widehat{K}} \leq \norme{\mathcal{C}_{\text{seq}}}_{\widehat{K}}$, but which does not belong to $\mathcal{C}_{\text{seq}}$,
\item[$\bullet$] and a matrix element $c$ of $\pi_\infty$;  \end{itemize}
let us inspect the convolution $\chi_\lambda \star \left( c_{|K}\right)$. By definition of the Fell topology, there exists a sequence  $(c_n)_{n \in \N}$ of complex-valued functions on $G$, in which each map $c_n$ is a matrix element of $\pi_n$, and which as $n$ goes to infinity goes to $c$ uniformly on compact subsets of $G$. Since the $K$-type $\lambda$ appears in none of the $\pi_n$, we have $\chi_\lambda \star \left( (c_{n})_{|K}\right) = 0$ for each $n$. But  $((c_n)_{|K})_{n \in \N}$ goes to $c_{|K}$ uniformly on $K$; so $\chi_\lambda \star \left( (c_{n})_{|K}\right) $ does pointwise converge to the  $\chi_\lambda \star\left( c_{|K}\right)$. Thus the latter function must be zero. 

We conclude that a   $K$-type whose norm does not exceed $\norme{\mathcal{C}_{\text{seq}}}_{\widehat{K}}$ can appear in $\pi_\infty$ only if it belongs to  $\mathcal{C}_{\text{seq}}$. Besides, $\pi_\infty$ lies in $\widetilde{G}_\Theta$, so we already know (by Proposition \ref{vogan}) that its lowest $K$-types all have norm $\norme{\mathcal{C}_{\text{seq}}}_{\widehat{K}}$; this proves the lemma.
\end{proof}

\subsection{Verification of Baggett's criterion}

We can now prove that  $\mathcal{M}(\pi_\infty)$ is a limit point of $(\mathcal{M}(\pi_n))_{n \in \N}$ in the unitary dual  $\widehat{G_0}$. In \S \ref{jeu_ecriture}, we showed that a Mackey parameter for $\pi_n$ is $(\nu_n, \mu_\text{seq})$ $-$ recall that in the present context, the centralizer of  $\nu_n$ in $K$ is equal to $K_\text{seq}$ (and independent of $n$).

By the argument already used for  Lemma \ref{mac}, we also know that there exists a parabolic subgroup $P_\infty$ with Levi factor $L_\infty$, and an irreducible representation $\mu_\infty \in \widehat{K_\infty}$, such that  $\pi_\infty$ is equivalent with the irreducible representation $\inde_{P_\infty}^G(\mathbf{V}_{L_\infty}(\mu_\infty) \otimes e^{i\nu_\infty})$; the representation  $\pi_\infty$ then admits  $(\nu_\infty, \mu_\infty)$ as a Mackey parameter.

Given Baggett's criterion \ref{Baggett}, to complete the proof of Theorem \ref{continuite}, we need only verify the following fact:

\begin{lemm} The representation $\mu_{\infty}$ of $K_\infty$ is contained in  $\ind_{K_{\text{\emph{seq}}}}^{K_{\infty}}(\mu_{\text{\emph{seq}}})$. \end{lemm}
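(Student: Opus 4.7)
The plan is to realize $\pi_\infty$ as an irreducible constituent of the (possibly reducible) standard module at the \emph{limiting} continuous parameter $\nu_\infty$, namely $\ind_{P_{\text{seq}}}^G(\mathbf{V}_{L_{\text{seq}}}(\mu_{\text{seq}}) \otimes e^{i\nu_\infty})$, then to exploit induction in stages through the chain $L_{\text{seq}} \subset L_\infty \subset G$ together with Vogan's theory inside $L_\infty$ to recognize $\mu_\infty$ as a summand of $\ind_{K_{\text{seq}}}^{K_\infty}(\mu_{\text{seq}})$.

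For the first step, I observe that $\nu_\infty$ lies in $\mathfrak{a}_\infty^\star$: by construction, $L_\infty = Z_G(\nu_\infty)$ is generated by $M_{\min}$ and the root subgroups for roots $\alpha$ with $\langle \alpha, \nu_\infty \rangle = 0$, so $\nu_\infty$ is orthogonal to the span of those roots, which is precisely the orthogonal complement of $\mathfrak{a}_\infty$ inside $\mathfrak{a}$. Combined with $\nu_n \to \nu_\infty$ and the closure description of §\ref{composantes_connexes} applied to $B = \{\pi_n\}_{n \in \N}$, this forces $\pi_\infty$ to appear as an irreducible constituent of the standard module $\ind_{P_{\text{seq}}}^G(\mathbf{V}_{L_{\text{seq}}}(\mu_{\text{seq}}) \otimes e^{i\nu_\infty})$.

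Next, I choose a parabolic $P_\infty' = L_\infty N_\infty'$ of $G$ containing $P_{\text{seq}}$; then $P_{\text{seq}} \cap L_\infty$ is a parabolic of $L_\infty$ with Levi factor $L_{\text{seq}}$. Because $\nu_\infty$ is trivial on $\mathfrak{a}_{\text{seq}} \cap \mathfrak{a}_\infty^\perp$, induction in stages gives
\[
\ind_{P_{\text{seq}}}^G\bigl(\mathbf{V}_{L_{\text{seq}}}(\mu_{\text{seq}}) \otimes e^{i\nu_\infty}\bigr) \simeq \ind_{P_\infty'}^G\bigl(\rho_\infty \otimes e^{i\nu_\infty}\bigr),
\]
where the $L_\infty$-representation $\rho_\infty := \ind_{P_{\text{seq}} \cap L_\infty}^{L_\infty}\bigl(\mathbf{V}_{L_{\text{seq}}}(\mu_{\text{seq}})\bigr)$ is tempered with real infinitesimal character (being parabolically induced from such a representation with trivial continuous parameter). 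By Vogan's classification of tempered representations of $L_\infty$ with real infinitesimal character,
\[
\rho_\infty \simeq \bigoplus_\eta \mathbf{V}_{L_\infty}(\eta),
\]
the sum running over the distinct lowest $K_\infty$-types of $\rho_\infty$; and by Vogan's lowest-$K$-type theorem for parabolic induction within $L_\infty$, these lowest $K_\infty$-types coincide with the lowest $K_\infty$-types appearing in $\ind_{K_{\text{seq}}}^{K_\infty}(\mu_{\text{seq}})$.

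Since $\nu_\infty$ is $\mathfrak{a}_\infty$-regular, each $\ind_{P_\infty'}^G\bigl(\mathbf{V}_{L_\infty}(\eta) \otimes e^{i\nu_\infty}\bigr)$ is irreducible, and these are exactly the irreducible constituents of the standard module above. Hence $\pi_\infty$ is equivalent to $\ind_{P_\infty'}^G\bigl(\mathbf{V}_{L_\infty}(\eta_0) \otimes e^{i\nu_\infty}\bigr)$ for some lowest $K_\infty$-type $\eta_0$ of $\rho_\infty$. Comparing with the Mackey realization $\pi_\infty \simeq \ind_{P_\infty}^G\bigl(\mathbf{V}_{L_\infty}(\mu_\infty) \otimes e^{i\nu_\infty}\bigr)$ and using the uniqueness of the lowest $K_\infty$-type of a tempered $L_\infty$-representation with real infinitesimal character, one obtains $\eta_0 = \mu_\infty$. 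Therefore $\mu_\infty$ is a lowest $K_\infty$-type of $\rho_\infty$, and in particular appears in $\ind_{K_{\text{seq}}}^{K_\infty}(\mu_{\text{seq}})$. The subtle point is the bookkeeping in induction by stages — specifically verifying that the transverse component of $\nu_\infty$ relative to $\mathfrak{a}_\infty$ vanishes (handled by the first step), and matching the resulting outer constituent with the Mackey description of $\pi_\infty$ via Langlands/Vogan uniqueness.
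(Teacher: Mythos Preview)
Your argument is essentially correct and takes a genuinely different route from the paper. The paper argues by contradiction at the level of $K$-types: it compares the restrictions $\inde_{K_\infty}^K(\mu_\infty)$ and $\inde_{K_{\text{seq}}}^K(\mu_{\text{seq}})$, invokes Lemma~\ref{ktype_commun} to show that every lowest $K$-type of $\pi_\infty$ is a lowest $K$-type of $\pi_n$, and then uses \cite[Lemma 4.2]{AAMackey} to derive a contradiction. Your approach instead works at the level of $G$-modules: you realize $\pi_\infty$ inside the $P_{\text{seq}}$-standard module at $\nu_\infty$, do induction in stages through $L_\infty$, and read off $\mu_\infty$ from the decomposition of $\rho_\infty$. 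This is more direct conceptually and bypasses Lemma~\ref{ktype_commun} entirely, at the cost of invoking the full tempered classification inside $L_\infty$.

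There is, however, a gap in your justification of the first step. The closure results recalled in \S\ref{composantes_connexes} and Proposition~2.4 are stated in terms of the original discrete pair $(L,\sigma)$: they only tell you that $\pi_\infty$ occurs in $\inde_{P}^G(\sigma\otimes e^{i\nu_\infty})$, i.e.\ in $\bigoplus_\mu \inde_{P_{\text{seq}}}^G(\mathbf{V}_{L_{\text{seq}}}(\mu)\otimes e^{i\nu_\infty})$, with no control over which summand $\mu$. In fact Proposition~2.4, as stated, would place \emph{every} constituent of $\inde_P^G(\sigma\otimes e^{i\nu_\infty})$ in the closure of $\{\pi_n\}$, including those attached to $\mu\neq\mu_{\text{seq}}$. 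To pin down $\mu=\mu_{\text{seq}}$ you need an additional argument: either the paper's Lemma~\ref{ktype_commun} (which would collapse your proof back onto the paper's), or a direct Fell-continuity argument for the family $\nu\mapsto \inde_{P_{\text{seq}}}^G(\mathbf{V}_{L_{\text{seq}}}(\mu_{\text{seq}})\otimes e^{i\nu})$ in the compact picture (exactly as in the footnote to Proposition~\ref{connexes}, now applied with $P_{\text{seq}}$ in place of $P$), combined with the fact that weak containment of an irreducible in a finite-length tempered module forces it to be a summand. That argument is routine, but it is what is doing the work in your first step, not the reference you cite.
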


\begin{proof} Suppose the Lemma is false. Let us induce to $K$ and compare the $K$-modules 
\begin{equation} \label{restr} \inde_{K_\infty}^K(\mu_\infty) \quad\quad  \text{and}  \quad\quad \inde_{K_\infty}^K\left( \inde_{K_{\text{{seq}}}}^{K_{\infty}}(\mu_{\text{{seq}}}) \right) \simeq \inde_{K_{\text{{seq}}}}^{K}(\mu_{\text{{seq}}}) ; \end{equation}
more precisely, let us inspect their lowest $K$-types. If $\tilde{\mu}$ is an irreducible representation of $K_{\infty}$ that appears in $\inde_{K_{\text{{seq}}}}^{K_{\infty}}(\mu_{\text{{seq}}})$, then under our assumption that the Lemma is false, we have $\mu_\infty \neq \tilde{\mu}$; but in that situation, we know \cite[Lemma 4.2]{AAMackey} that the representations  $\inde_{K_\infty}^K(\mu_\infty)$ and $\inde_{K_\infty}^K(\tilde{\mu})$ have no lowest $K$-type in common. The second $K$-module in \eqref{restr} is a direct sum of  $K$-modules that each read $\inde_{K_\infty}^K(\tilde{\mu})$ for some $\tilde{\mu} \neq \mu_\infty$; so the two $K$-modules to be compared in \eqref{restr} actually have no lowest $K$-type in common.

To see that this is impossible, we only have to point out that the two $K$-modules in \eqref{restr} are the restrictions to $K$ of $\pi_\infty$ and $\pi_n$ (see \cite{AAMackey}, Remark 2.3). Lemma \ref{ktype_commun} shows that each lowest $K$-type in  $\pi_\infty$ is also a lowest $K$-type in $\pi_n$, so the set of lowest $K$-types of the first module of  \eqref{restr} is contained in the set of lowest $K$-types of the second. The Lemma follows. 
  \end{proof}

%---------------- Backmatter---------------------%
\backmatter

\bibliography{topo}
\bibliographystyle{smfplain}

\end{document}